\newcommand{\R}{{\mathbb R}}
\newcommand{\Z}{{\mathbb Z}}
\newcommand{\N}{{\mathbb N}}
\newtheorem{thm}{Theorem}[section]
\newtheorem{lem}[thm]{Lemma}
\newtheorem{question}[thm]{Question}
\newtheorem{definition}[thm]{Definition}
\newtheorem{remark}[thm]{Remark}
\newtheorem{proposition}[thm]{Proposition}
\theoremstyle{definition}
\title[]{On continuous orbit equivalence rigidity for virtually cyclic group actions}
\author{Yongle Jiang}
\address{Y. Jiang, School of Mathematical Sciences, Dalian University of Technology, Dalian, 116024, China}
\email{yonglejiang@dlut.edu.cn}
\begin{document}
\date{\today}

\begin{abstract}
We prove that for any two continuous minimal (topologically free) actions of the infinite dihedral group on an infinite compact Hausdorff space, they are continuously orbit equivalent only if they are conjugate. We also show the above fails if we replace the infinite dihedral group with certain other virtually cyclic groups, e.g. the direct product of the integer group with any non-abelian finite simple group.
\end{abstract}

\subjclass[2010]{Primary 37A20, Secondary 37B05, 20F65}

\keywords{Cocycle, continuous orbit equivalence, rigidity, infinite dihedral group, skew product actions}

\maketitle

\section{introduction}

Let $G$ be a countable discrete group and $X$ be a compact Hausdorff space. Denote by $\mathcal{C}$ a class of continuous actions of $G$ on $X$. A natural and classical topic is  to classify members in $\mathcal{C}$ up to certain equivalence relation. In classical topological dynamical system, people have studied intensively the classification of $\Z$-actions, or amenable group actions in general, up to conjugacy. Historically speaking, the notion of entropy was invented to distinguish Bernoulli shifts on finite-bases with different sizes up to conjugacy (first in the measurable setting). Since one decade ago, more and more attention has been put on a much wider class of acting groups, i.e. sofic groups after the pioneering work of L. Bowen on sofic entropy in the measurable setting \cite{Bow} and D. Kerr and H. Li shortly in the topological setting \cite{KL1}.

In contrast to the conjugacy relation, X. Li initiated a systematic study of the notion of continuous orbit equivalence (see Definition \ref{def: coe}) for topological free actions of a general group $G$ \cite{Li} during his study of crossed product C$^*$-algebras. Note that this notion and its weaker versions, e.g. topological orbit equivalence have been studied in special cases (see \cite{BT, GPS95, Su}) before Li's work. In particular, Giordano-Putnam-Skau \cite{GPS95} and Giordano-Matui-Putnam-Skau \cite{GMPS08, GMPS10} have proved a series of remarkable results on classification of minimal $\Z$-actions on the Cantor set up to topological orbit equivalence.

Although continuous orbit equivalence relation is by definition, a priori, weaker than the conjugacy relation, an interesting phenomenon is that these two notions may coincide in certain cases.
To better describe this, let us say a continuous action $\alpha$ of $G$ on $X$ is  \emph{continuous orbit equivalence rigid for $\mathcal{C}$} if for any continuous action $\beta\in \mathcal{C}$, $\alpha$ is continuously orbit equivalent to $\beta$ implies that they are conjugate, denoted by $\alpha\overset{coe}{\sim}\beta\Rightarrow \alpha\overset{conj.}{\sim}\beta$. Note that here, compared with X. Li's definition \cite[Definition 2.5]{Li}, we further assume the acting groups are the same.

Now, we take $\mathcal{C}$ to be the class of minimal topologically free actions
and explain two motivations behind this work.

First, continuous orbit equivalence rigidity has been established for various group actions, see e.g. \cite{Li, BT, Sc1, Sc2, CM, CJ}. However, as far as we know, the acting groups are assumed to be torsion-free in all written proofs (see also Remark \ref{remark: odometer actions are done in CM} and Remark \ref{remark: torsion free assumption is unnecessary for full shifts}). Thus it is natural to ask whether continuous orbit equivalence rigidity can be established when the acting groups contain non-trivial torsion elements. Second, a well-known result due to Boyle-Tomiyama \cite{BT} says that two topologically transitive, topologically free $\Z$-actions (e.g. minimal free $\Z$-actions) are continuously orbit equivalent only if they are conjugate. This result has been extended to minimal equicontinuous actions of $\Z^{d}$ ($d\geq 2$) in \cite{CM}. More precisely, it is shown there that for minimal equicontinuous $\Z^d$-systems, continuous orbit equivalence implies that the systems are virtually piecewise conjugate. Moreover, actions of virtually cyclic groups have been considered in the study of dynamic asymptotic dimension \cite{ALSS} and Matui's HK conjecture \cite{OS, Sca} recently. Thus, it is natural to study the analogy of Boyle-Tomiyama's above mentioned result for virtually cyclic group actions, i.e. we ask the following question.
\begin{question}
Can we establish continuous orbit equivalence rigidity for minimal topologically free actions of infinite virtually cyclic (but non-cyclic) groups?
\end{question}

In general, we could not hope continuous orbit equivalence rigidity holds true for all infinite virtually cyclic groups. Indeed, we have the following theorem.

\begin{thm}\label{thm: - result}
Let $F$ be any non-trivial finite group with trivial center, e.g. a non-abelian finite simple group. Then there exist two minimal free actions $\widetilde{\alpha}$ and $\widetilde{\alpha}'$ of $\Z\times F$ on an infinite compact Hausdorff space $X$ such that $\widetilde{\alpha}\overset{coe}{\sim}\widetilde{\alpha}'$ but $\widetilde{\alpha}\overset{conj.}{\not\sim}\widetilde{\alpha}'$.
\end{thm}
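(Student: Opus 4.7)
The plan is to construct both actions as skew products over a common minimal free $\Z$-action on an infinite compact Hausdorff space $Y$, with fibers $F$, so that they share the same orbit relation but are distinguished by a cocycle twist. Take $\widetilde{\alpha}'$ to be the untwisted product, and $\widetilde{\alpha}$ to be a twist by a constant, non-coboundary $F$-valued cocycle.

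Concretely, fix any minimal free $\Z$-action $\alpha:\Z\curvearrowright Y$ for which every power $\alpha_k$ ($k\ge 1$) is still minimal (e.g.\ an irrational rotation on $Y=\T$), pick a non-identity element $f_0\in F\setminus\{e\}$, and on $X:=Y\times F$ define
\[
\widetilde{\alpha}_{(n,f)}(y,g):=(\alpha_n y,\,f_0^n g f^{-1}),\qquad \widetilde{\alpha}'_{(n,f)}(y,g):=(\alpha_n y,\,g f^{-1}).
\]
The cocycle identity for $(n,y)\mapsto f_0^n$ is immediate, so these are genuine $(\Z\times F)$-actions. Minimality and freeness come almost for free: the orbit of $(y,g)$ under either action equals $\Orb_\alpha(y)\times F$ (fix $n$ and let $f$ sweep $F$), dense by minimality of $\alpha$; freeness uses freeness of $\alpha$ on the first factor and freeness of right translation on the second. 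Since the two orbit relations coincide as subsets of $X\times X$, the identity map $X\to X$ is already an orbit equivalence; the associated cocycle
\[
\sigma\bigl((n,f),(y,g)\bigr)=\bigl(n,\,f g^{-1} f_0^{-n} g\bigr)
\]
is continuous in $(y,g)$ for each fixed $(n,f)$, and the reverse cocycle is analogous. Hence $\widetilde{\alpha}\overset{coe}{\sim}\widetilde{\alpha}'$.

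The substantive step is non-conjugacy. Suppose $\phi:X\to X$ conjugates $\widetilde{\alpha}$ to $\widetilde{\alpha}'$. The restrictions of both actions to $\{0\}\times F$ agree (both are right translation of $F$ on the second factor), so $\phi$ intertwines this common subaction, which is free and transitive on each fiber $\{y\}\times F$. A short unpacking then forces
\[
\phi(y,g)=(\psi(y),\,h(y)g)
\]
for a homeomorphism $\psi$ of $Y$ and a continuous map $h:Y\to F$. Intertwining $\widetilde{\alpha}_{(n,0)}$ with $\widetilde{\alpha}'_{(n,0)}$ then yields $\psi\alpha_n=\alpha_n\psi$ together with the coboundary-type equation $h(\alpha_n y)\,f_0^n=h(y)$. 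Taking $n$ to be the order $k$ of $f_0$ gives $h\circ\alpha_k=h$; since $\alpha_k$ is minimal, $h$ is constant; the $n=1$ case then forces $f_0=e$, contradicting our choice.

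The step I expect to require the most care is the structural reduction of an arbitrary conjugacy to the canonical form $(\psi(y),h(y)g)$; it rests essentially on the observation that the $\{0\}\times F$-subactions of $\widetilde{\alpha}$ and $\widetilde{\alpha}'$ coincide, so any conjugacy must intertwine a free transitive right action on each fiber. Once this is in place, the rest is purely algebraic: the coboundary equation, together with total minimality of $\alpha$, is blocked by $f_0\ne e$. We note that the construction above does not appear to require $Z(F)=\{e\}$; the triviality-of-center hypothesis in the theorem may be stated to single out the natural family of non-abelian finite simple groups, or to match an alternative construction exploiting the left-right action of $F\times F$ on $F$ by $(f_1,f_2)\cdot g=f_1 g f_2^{-1}$, which is free precisely when $Z(F)=\{e\}$.
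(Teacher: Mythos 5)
Your construction and your proof of the coe half coincide with the paper's: both build skew products over a minimal free $\Z$-action with $c'\equiv e$ and $c$ a cocycle that cannot be untwisted, and both observe that the two actions share the orbit relation $\Orb_\alpha(y)\times F$, so the identity map is an orbit equivalence with cocycle $(n,f)\mapsto\bigl(n,\,fg^{-1}f_0^{-n}g\bigr)$; this is Proposition~\ref{prop: coe but not conjugacy}(i)--(ii). Your choice of $c$ is more elementary than the paper's: you take the homomorphism $n\mapsto f_0^n$ and kill it by total minimality of an irrational rotation, whereas the paper produces a non-coboundary $\Z/p\Z$-valued cocycle from a weakly mixing system or an odometer (Propositions~\ref{prop: existence of non-coboundaries for weak mixing actions} and~\ref{prop: existence of non-coboundaries for odometers}). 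That part of your argument is correct.

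The gap is in the non-conjugacy step. You treat a conjugacy as an equivariant homeomorphism over the identity of $\Z\times F$, but conjugacy in this paper (see the paragraph after Definition~\ref{def: coe}) allows an arbitrary $\Phi\in\Aut(\Z\times F)$, i.e.\ $\phi(\widetilde{\alpha}_{(n,f)}x)=\widetilde{\alpha}'_{\Phi(n,f)}(\phi x)$. Your sentence ``the restrictions of both actions to $\{0\}\times F$ agree, so $\phi$ intertwines this common subaction'' silently assumes $\Phi|_{\{0\}\times F}=\mathrm{id}$, and ``intertwining $\widetilde{\alpha}_{(n,0)}$ with $\widetilde{\alpha}'_{(n,0)}$'' assumes $\Phi(1,e)=(1,e)$; neither is justified, and this is precisely where the hypothesis $C(F)=\{e\}$ must be used (the paper's Proposition~\ref{prop: coe but not conjugacy}(iii) computes $\Aut(F\times\Z)$ for exactly this purpose). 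The repair is short: $\Phi$ preserves the torsion subgroup $\{0\}\times F$, acting by some $\epsilon\in\Aut(F)$, and since $(1,e)$ is central its image $(\pm 1,g_0)$ must have $g_0\in C(F)=\{e\}$; your computation then goes through with $f_0$ replaced by $\epsilon(f_0)$, which has the same order. Your closing remark that the construction ``does not appear to require $Z(F)=\{e\}$'' is the symptom of this omission and is false as stated: if $f_0\in Z(F)\setminus\{e\}$, then $(n,f)\mapsto(n,f_0^{-n}f)$ is an automorphism of $\Z\times F$ and the identity map of $X$ conjugates $\widetilde{\alpha}$ to $\widetilde{\alpha}'$ over it, so the two actions are conjugate. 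The hypothesis is doing real work, and your proof has to engage with the automorphism group to see it.
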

On the positive side, when dealing with the infinite dihedral group, we do have such a rigidity theorem.
\begin{thm}\label{thm: + result}
Let $\alpha$ and $\beta$ be any two minimal (topologically free) actions of the infinite dihedral group $D_{\infty}=\Z\rtimes \Z/{2\Z}$ on an infinite compact Hausdorff space $X$. Then $\alpha\overset{coe}{\sim}\beta\Rightarrow\alpha\overset{conj.}{\sim}\beta$. 
\end{thm}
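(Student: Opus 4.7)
The plan is to reduce to the Boyle--Tomiyama rigidity theorem for $\Z$-actions by using that $\langle a\rangle\cong\Z$ is the unique torsion-free index-two subgroup of $D_\infty=\langle a,b\mid b^2=e,\;bab=a^{-1}\rangle$, hence characteristic. Fix a continuous orbit equivalence $\phi\colon(X,\alpha)\to(X,\beta)$ with continuous cocycle $c\colon D_\infty\times X\to D_\infty$ satisfying $\phi(\alpha_g x)=\beta_{c(g,x)}\phi(x)$. Let $\pi\colon D_\infty\twoheadrightarrow D_\infty/\langle a\rangle\cong\Z/2\Z$ be the quotient and set $p(x):=\pi(c(a,x))$, $q(x):=\pi(c(b,x))$.

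The first and main step is to show that, after possibly modifying $\phi$ by a cohomologous change (i.e.\ $\phi\mapsto\beta_{f(\cdot)^{-1}}\circ\phi$ for a continuous $f\colon X\to D_\infty$), one has $c(a,\cdot)\in\langle a\rangle$, equivalently $p\equiv 0$. The cocycle identity for $b^2=e$ gives $q(\alpha_b x)=q(x)$, and the identity for $bab=a^{-1}$ produces an analogous $\Z/2\Z$-valued relation coupling $p$ and $q$ at shifted arguments; I plan to combine these with the minimality and topological freeness of $\alpha$ to show $p$ is a coboundary for the $\alpha_a$-action, which is then absorbed into $\phi$. The argument naturally splits depending on whether $\alpha|_{\langle a\rangle}$ is itself minimal on $X$ or $X$ admits a clopen partition $Y\sqcup\alpha_b(Y)$ on whose pieces $\alpha|_{\langle a\rangle}$ acts minimally; in the decomposable case the desired coboundary is realized using the indicator of $Y$, while in the minimal case the dihedral relations force $p$ to be $\alpha_a$-invariant and hence constant, and the constant value $1$ is ruled out by a descent argument showing that otherwise $\phi$ would intertwine $\alpha_{a^2}$ with $\beta_a$ through a coboundary cocycle $n(x)=m(\alpha_a x)-m(x)$ which would force $m$ to be unbounded on $X$, a contradiction. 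Once $p\equiv 0$, the map $\phi$ restricts to a continuous orbit equivalence between the $\Z$-actions $\alpha|_{\langle a\rangle}$ and $\beta|_{\langle a\rangle}$ on each minimal piece, and Boyle--Tomiyama yields $\phi\circ\alpha_a=\beta_a^{\varepsilon}\circ\phi$ for $\varepsilon\in\{\pm1\}$; composing with $\beta_b$ if $\varepsilon=-1$ arranges $\phi\circ\alpha_a=\beta_a\circ\phi$.

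The final step is to show $\phi\circ\alpha_b=\beta_b\circ\phi$. The dihedral relations combined with the $a$-intertwining force $c(b,x)=a^{k(x)}b$ with $k\colon X\to\Z$ continuous and both $\alpha_b$- and $\alpha_{ab}$-invariant, hence $D_\infty$-invariant, hence constant at some $k_0\in\Z$. In the decomposable case this constant can be eliminated via an $\langle a\rangle$-valued coboundary built from the indicator $\mathbf{1}_Y$, which shifts $k_0$ by an odd amount and thus reaches $k_0=0$; in the minimal case one uses a topological-freeness argument applied at fixed points of $\alpha_{ab}$ (together with the density of points whose $\phi$-image has trivial $\beta$-stabiliser) to force $k_0=0$. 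The main obstacle is the cohomology-vanishing step for $p$: minimal $\Z$-actions in general admit non-trivial continuous $\Z/2\Z$-valued cocycles, so the argument must genuinely use both dihedral relations together with topological freeness. A secondary difficulty is the elimination of $k_0$ in the non-decomposable case, where no clopen fundamental domain for $\alpha_b$ is directly available and one must argue through fixed-point analysis and density of the free locus.
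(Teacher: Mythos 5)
Your overall architecture (untwist the mod-$\langle a\rangle$ part of $c(a,\cdot)$, reduce to Boyle--Tomiyama on the index-two copy of $\Z$, then pin down $c(b,\cdot)$ using the reflection) runs parallel to the paper's, but the step you yourself flag as the main obstacle --- showing that the $\Z/2\Z$-valued cocycle $p(x)=\pi(c(a,x))$ is a coboundary for $\alpha|_{\langle a\rangle}$ --- is not established, and the two mechanisms you propose for it do not work. In the decomposable case $X=Y\sqcup\alpha_b(Y)$ the set $Y$ is $\alpha_a$-invariant, so the $\alpha_a$-coboundary of $\mathbf{1}_Y$ (or of any function constant on the two components) is identically zero and cannot untwist a nonzero $p$. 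In the minimal case the dihedral relations do not make $p$ $\alpha_a$-invariant: writing $q(x)=\pi(c(b,x))$, the relation $bab^{-1}=a^{-1}$ only yields identities of the shape $p(a^{-1}x)=q(abx)+p(bx)+q(bx)$, which couple $p$ at $x$ and $a^{-1}x$ through values of $p$ and $q$ at points in the image of $\alpha_b$ and give no invariance. In fact $p$ is in general a genuinely nonconstant coboundary: it equals $\mathbf{1}_{X_-}\circ\alpha_a-\mathbf{1}_{X_-}$, where $X=X_+\sqcup X_-$ is the clopen partition according to whether $c(a^n,x)$ stays within bounded distance of $a^{n}$ or of $a^{-n}$, and $X_-$ need not be $\langle a\rangle$-invariant. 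The ingredient that produces this partition --- and, more importantly, shows that the full defect $c(a^n,x)a^{\mp n}$ takes only finitely many values, so that Gottschalk--Hedlund (equivalently the bounded-cocycle form of Boyle--Tomiyama) applies --- is the fact that $g\mapsto c(g,x)$ is a bi-Lipschitz bijection of $D_\infty$ combined with Benjamini--Shamov's classification of bi-Lipschitz bijections of $\Z$. Nothing in your proposal plays this role, and without some such quantitative input the cohomology-vanishing step cannot be carried out; as you note, minimality and topological freeness alone are compatible with non-coboundary $\Z/2\Z$-cocycles.

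Two smaller points. Your ``descent argument'' ruling out $p\equiv 1$ is not yet an argument; in the paper this possibility is subsumed by the $X_\pm$ analysis rather than excluded separately. And you should not try to force $k_0=0$ at the end: since $a\mapsto a$, $b\mapsto a^{k_0}b$ is an automorphism of $D_\infty$, conjugacy only requires identifying $c(b,\cdot)$ with $a^{k_0}b$ up to coboundary for \emph{some} constant $k_0$; insisting on $k_0=0$ is unnecessary, and in the minimal case (where the action may be free, so $\alpha_{ab}$ has no fixed points) the fixed-point argument you sketch is not available.
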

Note that minimal actions of $D_{\infty}$ on an infinite compact Hausdorff space is automatically topologically free, see Proposition \ref{prop: minimal implies topologically free}. Moreover,
under the assumptions in the above theorem, we know continuous orbit equivalence implies the two actions have the same topological entropy. This may be compared with \cite[Theorem 7.5]{KT} and \cite[Theorem D]{KL2}, where it is shown that many non-virtually cyclic groups admit actions for which topological entropy is an invariant of continuous orbit equivalence.
For possible generalizations of the theorems to other virtually cyclic groups, see the discussion in Section 
\ref{sec: remarks}. Here, we content ourselves with not discussing a general version of the above theorem. Below, we briefly describe strategy for the proofs.

For the proof of Theorem \ref{thm: - result}, we use the skew product construction to reduce it to a problem of constructing minimal actions admitting continuous cocycles into finite groups which are not coboundaries. On the positive side, one may compare the proof of Theorem \ref{thm: + result} with the proof of Boyle-Tomiyama \cite{BT} for continuous orbit equivalence rigidity for minimal $\Z$-actions. Here, the basic strategy is as follows:  by a criterion \cite[Proposition 4.4]{Li} due to X. Li, it suffices to show the orbit cocycle $c$ is cohomologous to a group isomorphism of $D_{\infty}$. 
The starting point for our proof is the characterization of bi-Lipschitz bijections of $\Z$ due to Benjamini-Shamov \cite{BS}.
After restricting the orbit cocycle $c:\Z\times X\to D_{\infty}$ on the sub $\Z$-action, we may extract from it a defect cocycle $a$ which attains only finitely many values in $D_{\infty}$. Here, $a$ is defined to measure the defect between $c$ and a group homomorphism from $\Z$ to $D_{\infty}$. By twisting $a$ with a suitable coboundary, we can assume the range of $a$ is contained in $\Z$. If the sub $\Z$-action is minimal, one can apply the well-known Gottschalk-Hedlund theorem \cite{GH} for minimal $\Z$-actions to deduce $a$ is actually a coboudary, which implies the original orbit cocycle $c$ is trivial. For the general case, i.e. the sub $\Z$-action is not minimal, we can apply Gottschalk-Hedlund theorem to each $\Z$-component, now the existence of reflection of $\Z$ in the group structure of $D_{\infty}$ allows us to patch up untwisters for each component to get a global one.

The paper is organized into five sections.  In Section \ref{section: preliminaries}, we explain terminologies used in this paper, including cocycles, continuous orbit equivalence, skew product actions and induced actions. We also present examples of continuous actions. Then we give examples of minimal free actions admitting continuous cocycles which are not coboundaries in Section \ref{section: existence of non-coboundaries}. The proof of Theorem \ref{thm: - result} (resp. Theorem \ref{thm: + result}) is given in Subsection \ref{subsection:-part} (resp. Subsection \ref{subsection: +part}). We conclude the paper with several remarks on the possible generalization of the theorems in Section \ref{sec: remarks}. 

In this paper, $G$ and $H$ usually mean virtually cyclic groups or finitely generated groups, which is clear from the context. Moreover, we denote a target group by $K$. Unless otherwise specified, all cocycles are assumed to be continuous.

\section{preliminaries}\label{section: preliminaries}

\subsection{Cocycles and continuous orbit equivalence}
Let $G\curvearrowright X$ be a continuous action on a compact Hausdorff space and let $K$ be a discrete group. Recall that a continuous map $c: G\times X\to K$ is called a \emph{$K$-valued (continuous) cocycle} if $c(g_1g_2, x)=c(g_1, g_2x)c(g_2, x)$ for all $g_1$, $g_2\in G$ and all $x\in X$. Two continuous cocycles $c_1, c_2: G\times X\to K$ are  \emph{(continuously) cohomologous/equivalent} if there exists a continuous map $b: X\to K$ such that $c_1(g, x)=b(gx)^{-1}c_2(g, x)b(x)$ for all $g\in G$ and all $x\in X$. A continuous cocycle is called \emph{trivial (resp. a cobundary)} if it is equivalent to a group homomorphism $\phi: G\to K$ (resp. the trivial group homomorphism $\phi\equiv e_K$ on $G$), which is treated as a constant cocycle on $X$.

Note that when $G=\Z$, a $K$-valued cocycle $c$ for $\Z\curvearrowright X$ can be represented as a map $f: X\to K$. Indeed, for one direction, set $f(x)=c(1, x)$; for the other direction, we may define $c(1, x)=f(x)$ and use the cocycle identity to define $c(N, x)$ for all $N\in \Z$ and hence get a cocycle $c$. Sometimes, we may abuse our notation by calling $f$ a cocycle.

Given two continuous actions $G\curvearrowright X$ and $K\curvearrowright Y$, we have the notion of continuous orbit equivalence (written as coe for short) which is systematically studied by X. Li \cite[Definition 2.5]{Li}. 

\begin{definition}[coe]\label{def: coe}
$G\curvearrowright X$ and $K\curvearrowright Y$ are continuously orbit equivalent, denoted by  $G\curvearrowright X\overset{coe}{\sim}K\curvearrowright Y$, if there exists a homeomorphism $\phi: X\approx Y$ with inverse $\psi=\phi^{-1}$ and continuous maps $a: G\times X\to K$ and $b: K\times Y\to G$ such that 
\begin{align*}
\phi(gx)=a(g, x)\phi(x),~ \psi(hy)=b(h, y)\psi(y)
\end{align*}
for all $g\in G$, $h\in H$, $x\in X$ and $y\in Y$.
\end{definition}
If both $a$ and $b$ are further assumed to be group isomorphisms from $G$ to $K$ and from $K$ to $G$ respectively, then we call these two actions are \emph{conjugate}, written as $G\curvearrowright X\overset{conj.}{\sim}K\curvearrowright Y$. Clearly, conjugacy implies coe.

We remark that if $K\curvearrowright Y$ is \emph{topologically free} (i.e. for each $e\neq h\in K$, $\{y\in Y: hy\neq y\}$ is dense in $Y$), then $a$ is a continuous cocycle, see \cite[Lemma 2.8]{Li}. When both $a$ and $b$ are cocycles in the above definition, we simply call $b$ the inverse cocycle of $a$.

The following fact is proved independently in \cite{MST} and \cite{Li_qi}, we sketch the proof for completeness.

\begin{proposition}\label{prop: from coe to bi-Lipschitz maps}
Let $G$ and $K$ be finitely generated groups. Let $G\curvearrowright X$ and $K\curvearrowright Y$ be two topological free actions on compact spaces. Assume they are continuously orbit equivalent and $a: G\times X\to K$ is the associated continuous orbit cocycle, then $G\ni g\mapsto a(g,x)\in K$ is a bi-Lipschitz bijection for each $x\in X$ with respect to the right invariant word metrics on $G$ and $K$ respectively. 
\end{proposition}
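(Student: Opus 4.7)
The plan is first to verify that $g\mapsto a(g,x)$ is a bijection for every $x\in X$, and then to deduce the bi-Lipschitz estimates from the cocycle identity together with compactness. Fix finite symmetric generating sets $S_G\subset G$ and $S_K\subset K$, with right-invariant word metrics $d_G,d_K$ and word lengths $|\cdot|_G,|\cdot|_K$, and write $a_x(g):=a(g,x)$ and $b_y(k):=b(k,y)$ where $b:K\times Y\to G$ is the inverse orbit cocycle.

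For bijectivity, applying $\psi$ to $\phi(gx)=a(g,x)\phi(x)$ and using the cocycle identity for $\psi$ yields $b(a(g,x),\phi(x))\cdot x = gx$. On the dense $G_\delta$ subset $X_0\subset X$ of points with $\Stab_G(x)=\{e\}$ and $\Stab_K(\phi(x))=\{e\}$ (dense by topological freeness of both actions together with the Baire category theorem for compact Hausdorff spaces), this forces $b(a(g,x),\phi(x))=g$. Now $x\mapsto b(a(g,x),\phi(x))$ is a continuous map into the discrete group $G$ agreeing with $g$ on the dense set $X_0$; any preimage of some $g'\neq g$ would be an open set disjoint from $X_0$, hence empty, so the map equals $g$ on all of $X$. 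Thus $b_{\phi(x)}\circ a_x=\mathrm{id}_G$ for every $x$, and the symmetric argument gives $a_x\circ b_{\phi(x)}=\mathrm{id}_K$, so $a_x$ is a bijection for every $x\in X$.

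For the bi-Lipschitz bounds, continuity of $a$ on the compact space $X$ combined with discreteness of $K$ shows that $a(s,X)$ is finite for each $s\in S_G$, so $L:=\max_{s\in S_G,\,x\in X}|a(s,x)|_K<\infty$. Writing $g=s_1\cdots s_n$ with $s_i\in S_G$ and $n=|g|_G$, iterated application of the cocycle identity gives
\[
a(g,x)=a(s_1,s_2\cdots s_n x)\,a(s_2,s_3\cdots s_n x)\cdots a(s_n,x),
\]
so $|a(g,x)|_K\leq L|g|_G$. Combined with right-invariance via the identity $a(g_1,x)a(g_2,x)^{-1}=a(g_1g_2^{-1},g_2 x)$, this yields $d_K(a_x(g_1),a_x(g_2))\leq L\,d_G(g_1,g_2)$. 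The reverse Lipschitz bound follows by applying the same argument to the inverse cocycle $b$, using that $g=b(a(g,x),\phi(x))$ from the bijectivity step.

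The main obstacle is the first step: upgrading bijectivity from the residual set $X_0$ to all of $X$ requires the continuity-plus-discreteness density argument, since $a_x$ could a priori fail to be injective at points with non-trivial stabilizers. Once $a_x$ is known to be a bijection everywhere, the Lipschitz estimates are a routine consequence of the cocycle identity and finite generation.
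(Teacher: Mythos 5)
Your proof is correct and follows essentially the same route as the paper: the forward Lipschitz estimate comes from compactness, discreteness of $K$, and the (iterated) cocycle identity combined with right-invariance, and the reverse estimate comes from running the same argument on the inverse cocycle. The only difference is that you prove the inverse-cocycle relation $b(a(g,x),\phi(x))=g$ from scratch via topological freeness and a Baire-category density argument, whereas the paper delegates exactly this point to \cite[Lemma 2.10]{Li}.
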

Throughout the paper, a \emph{bi-Lipschitz bijection} means a bijective map $\phi$ such that both $\phi$ and $\phi^{-1}$ are Lipschitz maps in the usual sense.
\begin{proof}
Let $S$ and $T$ be any symmetric generating set for $G$ and $K$ respectively. Let $|\cdot|_S$ and $|\cdot|_T$ be associated word lengths, and let $d_S(\cdot, \cdot)$ and $d_T(\cdot, \cdot)$ be the corresponding right invariant word length metrics on $G$ and $K$, i.e. $d_S(g_1, g_2)=|g_2g_1^{-1}|_S$ and $d_T(h_1, h_2)=|h_2h_1^{-1}|_T$ for all $g_1$, $g_2\in G$ and $h_1$, $h_2\in K$.

Then $d_T(a(g_1, x), a(g_2, x))=|a(g_2, x)a(g_1, x)^{-1}|_T=|a(g_2g_1^{-1}, g_1x)|_T\leq |g_2g_1^{-1}|_S\cdot \sup_{x\in X}\max_{s\in S}|a(s, x)|_T=d_S(g_1, g_2)\cdot \sup_{x\in X}\max_{s\in S}|a(s, x)|_T$, where the inequality follows from the cocycle identity. This shows $g\mapsto a(g, x)$ is a Lipschitz map. 

To see $g\mapsto a(g, x)$ is a bijection and its inverse is a Lipschitz map, apply the similar argument to the inverse cocycle using \cite[Lemma 2.10]{Li}.
\end{proof}

The following lemma is well-known, but we present its proof for the reader's convenience.

\begin{lem}\label{lem: independent of target group for being coboundaries}
Let $\alpha: \Z\curvearrowright X$ be a minimal action on a compact space $X$ and $K_0\subset K$ be a closed subgroup in a topological group. Let $c: \Z\times X\to K_0$ be a continuous cocycle such that $c(n, x)=f(\alpha_n(x))f(x)^{-1}$ for some continuous map $f: X\to K$ and all $n\in \Z$ and $x\in X$. Then there is a continuous map $f': X\to K_0$ such that $c(n, x)=f'(\alpha_n(x))f'(x)^{-1}$ holds for all $n\in \Z$ and all $x\in X$.
\end{lem}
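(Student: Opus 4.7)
The plan is to exploit minimality of $\alpha$ to upgrade the $K$-valued untwister $f$ to a $K_0$-valued one by translating $f$ by a single group element.

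First I would fix any base point $x_0 \in X$ and observe that, since $c(n, x_0) \in K_0$ for every $n \in \Z$, the cocycle identity $c(n, x_0) = f(\alpha_n(x_0)) f(x_0)^{-1}$ forces $f(\alpha_n(x_0)) \in K_0 f(x_0)$ for every $n \in \Z$. In other words, the entire $\alpha$-orbit of $x_0$ is mapped by $f$ into the single left coset $K_0 f(x_0) \subset K$.

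Next I would promote this orbit-wise statement to a global one using continuity and minimality. Since $K$ is a topological group, right translation by $f(x_0)$ is a homeomorphism of $K$, so $K_0 f(x_0)$ is closed in $K$ (using that $K_0$ is closed). Therefore $f^{-1}(K_0 f(x_0))$ is a closed subset of $X$ containing the $\alpha$-orbit of $x_0$, which by minimality is dense in $X$. Hence $f(X) \subset K_0 f(x_0)$.

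Finally I would define $f' : X \to K_0$ by $f'(x) := f(x) f(x_0)^{-1}$. The previous step guarantees $f'(x) \in K_0$ for all $x \in X$, while a direct computation gives
\begin{equation*}
f'(\alpha_n(x)) f'(x)^{-1} = f(\alpha_n(x)) f(x_0)^{-1} f(x_0) f(x)^{-1} = f(\alpha_n(x)) f(x)^{-1} = c(n, x),
\end{equation*}
so $f'$ is the desired $K_0$-valued untwister, completing the proof.

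There is no real obstacle here; the content of the lemma is simply the observation that, along a dense orbit, an $f$-image is pinned into one coset of $K_0$, and that $K_0$'s being closed lets this conclusion propagate to all of $X$. The only point that deserves care is ensuring $K_0 f(x_0)$ is genuinely closed in $K$, for which we need $K$ to be a topological group (so that right translation is a homeomorphism) and $K_0$ to be a closed subgroup, both of which are in the hypotheses.
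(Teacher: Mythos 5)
Your proof is correct and is essentially the paper's argument: both exploit minimality to show that $f$ takes values in a single coset $K_0k$ (the paper phrases this as the continuous $\alpha$-invariant map $x\mapsto K_0f(x)\in K_0\backslash K$ being constant, you phrase it via density of one orbit plus closedness of the coset $K_0f(x_0)$), and then right-translate $f$ by $k^{-1}=f(x_0)^{-1}$ to land in $K_0$. The two write-ups use the closedness of $K_0$ in the same essential way, so there is nothing further to add.
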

\begin{proof}
Observe that the continuous map 
\[x\in X\to K_0f(x)\in K_0\backslash K\]
is $\alpha$-invariant. Hence minimality implies there exists some $k\in K$ such that $K_0f(x)=K_0k$ for all $x\in X$, i.e. $f(x)\in K_0k$ for all $x\in X$. Define $f'(x)=f(x)k^{-1}$. We still have $c(n, x)=f'(\alpha_n(x))f'(x)^{-1}$.
\end{proof}

\subsection{Skew product actions}\label{subsection: definition of skew product actions}
We recall the following well-known construction of skew product actions.
Let $\sigma: H\to Aut(K)$ be a group homomorphism. Let $c: H\curvearrowright X\to K$ be a \emph{skew cocycle}, i.e. for all $h_1$, $h_2\in H$ and all $x\in X$, 
\[c(h_1h_2, x)=c(h_1, h_2x)\sigma_{h_1}(c(h_2, x)).\]
One may check that $c((h_1h_2)h_3, x)=c(h_1(h_2h_3), x)$ holds for all $h_i\in H$, $1\leq i\leq 3$ and all $x\in X$. Besides, $c$ is a skew cocycle iff the map $c': H\curvearrowright X\to K\rtimes_{\sigma}H$ given by $c'(h, x)=(c(h, x), h)$ is a cocycle in the usual sense.

Given a group homomorphism $\sigma: H\to Aut(K)$, an action $H\curvearrowright X$ and a skew cocycle $c$ defined as above, we may define the \emph{generalized skew product action}:
 \begin{align*}
G:=K\rtimes_{\sigma} H&\curvearrowright X\times K\\
(k, h)(x, k')&:=(hx, c(h, x)\sigma_h(k')k^{-1}).
  \end{align*}
Note that when $\sigma$ is trivial, i.e. $\sigma\equiv id_K$, skew cocycles (resp. skew product actions) are reduced to the usual cocycles (resp. usual skew product actions) for the group $K\times H$. In this paper, we usually take $H=\Z$, $K$ be a finite group and $\sigma$ be trivial.
To emphasize the cocycle $c$, we usually denote $X\times K$ by $X\times_c K$.

\subsection{Induced actions}

Let $H$ be a subgroup of $G$. Fix any lift map $L: G/H\to G$ such that $L(gH)H=gH$ for all $g\in G$. Consider the associated cocycle $c: G\times G/H\to H$ given by $c(g, g'H)=L(gg'H)^{-1}gL(g'H)$ for all $g, g'\in G$. Given any continuous action $H\curvearrowright Y$, then the \emph{induced action} $G\curvearrowright G/H\times Y$ is defined as follows:
\[g(g'H, y)=(gg'H, c(g, g'H)y),~\mbox{for all $g, g'\in G$ and $y\in Y$.}\] 


\subsection{Examples of continuous actions}

\paragraph*{\textbf{Odometer actions}}
Fix a sequence of strictly increasing positive integers $(n_i)_{i\geq 1}$ such that $n_i\mid n_{i+1}$ for all $i\geq 1$. Let $\lim\limits_{\leftarrow}\Z/{n_i\Z}=\{(x_i+n_i\Z)_{i\geq 1}\in \prod_{i\geq 1}\Z/{n_i\Z}:~n_i\mid (x_{i+1}-x_i),~\forall~i\geq 1\}$.  The \emph{odometer action associated to $(n_i)$} is defined as the continuous action $\alpha:\Z\curvearrowright \lim\limits_{\leftarrow}\Z/{n_i\Z}$ given by \[\alpha_n((x_i+n_i\Z)_i)=(n+x_i+n_i\Z)_i,\] where $x_i\in \Z$.

\paragraph*{\textbf{Weakly mixing actions}}
A continuous action $\Z\curvearrowright X$ on a compact Hausdorff space is called \emph{(topologically) weakly mixing} \cite[p.23]{Gl} if the product action $\Z\curvearrowright X\times X$ is \emph{transitive}, i.e. for every pair of non-empty open sets $U$, $V$ in $X\times X$ there exists some $g\in \Z$ with $gU\cap V\neq\emptyset$. Examples of free minimal weakly mixing actions include the so-called Chac\'on system \cite[p. 27]{Gl}.

It is well-known and easy to check that for a continuous action $\Z\curvearrowright X$ on a compact Hausdorff space $X$, if $X$ has no isolated points and there is a point in $X$ with dense orbit, then $\Z\curvearrowright X$ is transitive. Conversely, for a compact metric space $X$, $\Z\curvearrowright X$ is transitive implies it admits a point with dense orbit, see e.g. \cite[Proposition 7.9]{KL_book}.

\subsection{Subgroups of $D_{\infty}$}

Let $D_{\infty}=\Z\rtimes \frac{\Z}{2\Z}=\langle s\rangle\rtimes \langle t\rangle=\langle s, t~|~t^2, tsts\rangle$ be the infinite dihedral group. We record a simple lemma on the structure of its subgroups.
\begin{lem}\label{lem: subgroups of the infinite dihedral group}
If $H$ is a subgroup of $D_{\infty}$, then either $H=\langle s^k, s^it\rangle$ for some $k\in\Z$ and some $0\leq |i|<k$, or $H=\langle s^kt\rangle$ or $H=\langle  s^k\rangle$  for some $k\in \Z$. Moreover, all subgroups can be realized as $\phi_i(H)$ for some $i\in\Z$ and some $H\in \{k\Z\rtimes \frac{\Z}{2\Z}, ~k\Z,~\frac{\Z}{2\Z}:~k\in\Z\}$, where, $\phi_i\in Aut(G)$ is defined by $\phi_i(s)=s, \phi_i(t)=s^it$.
\end{lem}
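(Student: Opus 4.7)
The plan is to classify an arbitrary subgroup $H\le D_\infty$ by analyzing its intersection with the normal cyclic subgroup $\langle s\rangle\cong\Z$, and then to check that the three ``standard'' subgroups, after being twisted by the automorphisms $\phi_i$, realize every subgroup produced by the classification.

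First I would set $H_0:=H\cap\langle s\rangle$. Since every subgroup of $\Z$ is cyclic, $H_0=\langle s^k\rangle$ for a unique $k\ge 0$. If $H\subseteq\langle s\rangle$, we are done: $H=\langle s^k\rangle$, which is of the third listed form. Otherwise $H$ contains at least one element of the form $s^it$. The defining relation $tst=s^{-1}$ gives $ts^it=s^{-i}$, and hence $(s^it)(s^jt)=s^{i-j}$, so any two reflections $s^it,s^jt\in H$ must satisfy $s^{i-j}\in H_0$, i.e.\ $i\equiv j\pmod k$. Thus the set of reflections in $H$ forms a single coset $i+k\Z$ of $\langle s^k\rangle$.

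I would then split into two subcases. If $k=0$, the reflection coset is a single element, so $H=\{e,s^it\}=\langle s^it\rangle$, of the second listed form. If $k>0$, Euclidean division lets us replace $i$ by a representative satisfying $0\le|i|<k$, and then $H=\langle s^k,s^it\rangle$, of the first listed form. This finishes the classification.

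For the ``moreover'' part, I would first verify that each $\phi_i$ is a well-defined automorphism of $D_\infty$ by checking it preserves the defining relations, namely $\phi_i(t)^2=(s^it)^2=s^i(ts^it)=s^i s^{-i}=e$ and $\phi_i(tsts)=(s^it)s(s^it)s=s^{i-1}(ts^it)s=s^{i-1}s^{-i}s=e$, with inverse given by $\phi_{-i}$. The direct identifications $\phi_i(\langle s^k,t\rangle)=\langle s^k,s^it\rangle$, $\phi_0(\langle s^k\rangle)=\langle s^k\rangle$, and $\phi_i(\langle t\rangle)=\langle s^it\rangle$ then exhaust the three types appearing in the first half of the statement. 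The only mildly delicate point is the choice of coset representative for the reflection part, which is handled by a single division with remainder, so I do not anticipate any serious obstacle.
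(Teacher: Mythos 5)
Your proposal is correct and follows essentially the same route as the paper: intersect $H$ with $\langle s\rangle$ to get $\langle s^k\rangle$, use $(s^it)(s^jt)=s^{i-j}$ to pin the reflections of $H$ to a single coset of $\langle s^k\rangle$, and split on $k=0$ versus $k>0$. The only difference is cosmetic — you spell out the verification that $\phi_i$ is an automorphism, which the paper leaves as "now clear."
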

\begin{proof}
Write $H\cap \Z=\langle s^k\rangle$ for some $k\in \Z$. Note that the set of torsion elements of $G$ is precisely $\{s^it: i\in \Z\}$.

If $H$ contains no non-trivial torsion elements, then $H\subset \Z$, i.e. $H=H\cap \Z=\langle s^k\rangle$.

If $H$ contains some non-trivial torsion element, say $s^it\in H$ for some $i\in \Z$. W.L.O.G., we may assume $|i|$ is smallest among all non-trivial torsion elements in $H$. 

Case 1: $i=0$. Then clearly, $H=\langle s^k, t\rangle$.

Case 2: $|i|>0$. 

If $k=0$, then $H=\langle s^it\rangle$. Indeed, assume not, then $s^jt\in H$ for some $j\neq i$, which implies that $(s^it)(s^jt)=s^{i-j}\in H\cap \Z$, a contradiction.

We may assume $k>0$.  Clearly, $0<|i|<k$, then $H=\langle s^k, s^it\rangle$. Indeed, assume $s^jt\in H$, then $(s^jt)(s^it)^{-1}=s^{j-i}\in H\cap \Z=\langle s^k\rangle$, hence $s^jt\in \langle s^k, s^it\rangle$.

The moreover part is now clear.
\end{proof}
\begin{remark}\label{remark: odometer actions are done in CM}
Note that this lemma implies that for two odometer actions of $D_{\infty}$, the notion of structurally conjugacy between them, as introduced in \cite[Definition 3.1]{CM},   coincides with the usual conjugacy. Thus \cite[Theorem 3.3]{CM} implies that for two free odometer actions of $D_{\infty}$, continuous orbit equivalence between them implies they are conjugate.
\end{remark}
Next, let us observe the following hold, see also \cite[Proposition 2.8]{OS}.

\begin{proposition}\label{prop: minimal implies topologically free}
If $\alpha: D_{\infty}\curvearrowright X$ is a minimal action on an infinite compact Hausdorff space $X$, then it is topologically free.
\end{proposition}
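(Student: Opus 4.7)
The plan is to treat the two kinds of non-identity elements of $D_\infty$ separately: the infinite-order rotations $s^k$ with $k\neq 0$, and the order-two reflections $s^it$ with $i\in\Z$.

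For $s^k$ with $k\neq 0$, I would first observe that $\mathrm{Fix}(s^k)$ is closed and $D_\infty$-invariant: it is plainly $s$-invariant, and $t\cdot\mathrm{Fix}(s^k)=\mathrm{Fix}(ts^kt^{-1})=\mathrm{Fix}(s^{-k})=\mathrm{Fix}(s^k)$. Minimality then forces this set to be $\emptyset$ or $X$; if it were $X$, the action of $D_\infty$ would factor through the finite dihedral quotient $D_\infty/\langle s^k\rangle$, but a finite group cannot act minimally on an infinite Hausdorff space (every orbit would be a finite closed set, hence cannot be dense). Hence $\mathrm{Fix}(s^k)=\emptyset$, and in fact $s^k$ acts freely.

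For $s^it$ I would argue by contradiction, supposing $U\subset\mathrm{Fix}(s^it)$ is a non-empty open subset. A direct computation in $D_\infty$, using $tst^{-1}=s^{-1}$, gives $s^j(s^it)s^{-j}=s^{2j+i}t$, so $s^jU\subset\mathrm{Fix}(s^{2j+i}t)$; combining this with $(s^{2j+i}t)(s^it)^{-1}=s^{2j}$ yields $U\cap s^jU\subset\mathrm{Fix}(s^{2j})=\emptyset$ for every $j\neq 0$, by the previous paragraph. Thus $\{s^jU\}_{j\in\Z}$ is a pairwise disjoint family of non-empty open sets in $X$.

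To close, I would invoke amenability of $D_\infty$: by the Markov--Kakutani fixed point theorem there exists a $D_\infty$-invariant Borel probability measure $\mu$ on $X$, whose support is a closed $D_\infty$-invariant non-empty subset and hence equals $X$ by minimality, so $\mu(U)>0$. But $s$-invariance of $\mu$ together with the disjointness above forces $1=\mu(X)\geq\sum_{j\in\Z}\mu(s^jU)=\infty$, a contradiction. Alternatively, one could decompose $X$ into $\langle s\rangle$-minimal pieces: either $X$ itself is $s$-minimal, in which case almost-periodicity of any point in $U$ produces some $j\neq 0$ with $s^jx\in U$ contradicting the disjointness, or $X=Y\sqcup tY$ with $Y$ and $tY$ disjoint $s$-minimal, in which case $s^it$ swaps the two pieces and has no fixed points at all. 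The main obstacle is exactly this last step: since $\mathrm{Fix}(s^it)$ is not $D_\infty$-invariant, one needs a genuinely global input (an invariant measure, or almost-periodicity) to turn local disjointness into a contradiction.
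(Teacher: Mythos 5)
Your proof is correct, but your primary route differs from the paper's. You handle the infinite-order elements by noting that $\mathrm{Fix}(s^k)$ is closed and $D_\infty$-invariant (since the conjugates of $s^k$ are $s^{\pm k}$ and $\mathrm{Fix}(s^{-k})=\mathrm{Fix}(s^k)$), so minimality forces it to be empty or all of $X$, the latter being impossible for an infinite space; you then kill open sets of reflection-fixed points by producing the pairwise disjoint family $\{s^jU\}_{j\in\Z}$ via the conjugation $s^j(s^it)s^{-j}=s^{2j+i}t$, and conclude with an invariant probability measure of full support. The paper instead first shows the sub-$\Z$-action must be minimal (if there are two $\Z$-minimal components $X_0\sqcup tX_0$ the action is already free), invokes its classification of subgroups of $D_\infty$ to see that any nontrivial stabilizer equals $\{e,s^nt\}$, and derives a contradiction from $\mathrm{Stab}(s^my)=s^m\mathrm{Stab}(y)s^{-m}$ forcing $2m=n-k$ for infinitely many $m$ --- essentially the same conjugation computation as yours, but run through stabilizers and recurrence rather than through a measure. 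Your ``alternative'' closing (splitting into $\Z$-minimal pieces and using almost periodicity, or the swap of the two pieces by $s^it$) is in substance the paper's argument. Your measure-theoretic closing is softer and avoids the subgroup lemma entirely, at the cost of two small imprecisions worth fixing: Markov--Kakutani applies to \emph{commuting} families, so for the non-abelian $D_\infty$ you should either cite Day's fixed point theorem for amenable groups or take an $s$-invariant measure $\mu$ from Markov--Kakutani and replace it by $\frac12(\mu+t_*\mu)$; and since $X$ is only assumed compact Hausdorff (not metrizable), you should work with a regular (Radon) probability measure so that the support is well defined and every open set meeting it has positive mass. Neither point is a genuine gap.
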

\begin{proof}
Without loss of generality, we may assume $\alpha$ is not free. Write $D_{\infty}=\Z\rtimes \frac{\Z}{2\Z}=\langle s, t|~t^2, tsts\rangle=\langle s\rangle\rtimes \langle t\rangle$. Then observe that the sub $\Z=\langle s\rangle$-action, i.e. $\alpha|_{\Z}: \Z\curvearrowright X$ is minimal.

Indeed, note that the number of minimal $\Z$-components in $X$ is either one or two (See the discussion before Case I in the proof of Theorem \ref{thm: + result} for details). If we have two minimal $\Z$-components, say $X_0$ and $tX_0$, then $\alpha|_{\Z}$ on each component is free and so is $\alpha$, contradicting to our assumption.

Now, as the action is minimal and $X$ is infinite, every stabilizer subgroup must be of infinite index in $D_{\infty}$. 
By Lemma \ref{lem: subgroups of the infinite dihedral group}, we deduce that for each $x\in X$, if the stabilizer group $Stab(x)$ is not trivial, then $Stab(x)=\{e, s^nt\}$ for some 
$n\in \Z$. To check topological freeness of $\alpha$, we only 
need to show that for each $n\in\Z$, the closed set $X_n:=\{x\in X: s^ntx=x\}$ is nowhere dense.

Note that $x\in X_n$ iff $Stab(x)=\{e, s^nt\}$. Now, assume that $\exists ~\emptyset\neq U\subsetneq X_n$, where $U$ is an open set. Then we may pick any $y\in X\setminus U$ and use $\Z\curvearrowright X$ is minimal to deduce that for each $N$, $\exists~ |m|>N$ such that $s^my\in U$. Hence, $\{e, s^nt\}=Stab(s^my)=s^mStab(y)s^{-m}$. Since $Stab(y)=\{e\}$ or $\{e, s^kt\}$ for some $k\in\Z$, we deduce that $2m=n-k$, which gives us a contradiction since $m$ can be arbitrarily large. 
\end{proof}

\subsection{Bi-Lipschitz bijections of $\Z$}

We need the following result in \cite{BS}.

\begin{thm}[Benjamini-Shamov]\label{thm: bi-Lipschitz bijection of Z}
Let $f:\Z\to\Z$ be a bi-Lipschitz bijection ($\Z$ is equipped with its usual metric, namely $\rho(x, y):=|x-y|$). Then either $\sup_{x\in \Z}|f(x)-x|<+\infty$ or $\sup_{x\in \Z}|f(x)+x|<+\infty$. More precisely, $f(x)=\pm x+\mbox{const}+r(x)$, where $\sup_{x\in \Z}|r(x)|<\infty$.
\end{thm}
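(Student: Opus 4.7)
My plan is to first establish the end behavior of $f$ on $\Z$, then derive bounded displacement from the identity (up to sign) using bijectivity.

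Step 1 (end behavior). Since $f^{-1}$ is Lipschitz, the set $f^{-1}([-M, M])$ is contained in a bounded interval of $\Z$ for every $M$, so $|f(x)| \to \infty$ as $|x| \to \infty$. Injectivity of $f$ rules out oscillation at infinity: if along $x_n, y_n \to +\infty$ one had $f(x_n) \to +\infty$ and $f(y_n) \to -\infty$, then, since consecutive differences of $f$ are bounded by the Lipschitz constant, $f$ would attain some fixed integer value infinitely often, contradicting injectivity. So as $x \to +\infty$ the sequence $f(x)$ tends to exactly one of $\pm\infty$, and similarly at $-\infty$. Surjectivity further rules out both tails going to the same infinity (otherwise $f^{-1}(\{y \leq -M\})$ would be finite for each $M$, contradicting surjectivity onto $(-\infty, -M] \cap \Z$). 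Thus $f$ either preserves the two ends of $\Z$ (Case $+$) or swaps them (Case $-$); in Case $-$ I replace $f$ by $-f$ to reduce to Case $+$. It remains to show $\sup_x |f(x) - x| < \infty$ in Case $+$.

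Step 2 (bounded displacement). End-preservation implies that only finitely many $k \geq 0$ have $f(k) < 0$ and only finitely many $k < 0$ have $f(k) \geq 0$, so up to a bounded perturbation $f$ restricts to a bi-Lipschitz self-bijection $\tilde f$ of $\Z_{\geq 0}$. For each $N$, set $K(N) := \max\{k : \tilde f(k) \leq N\}$. Bijectivity gives $|\{k : \tilde f(k) \leq N\}| = N+1$, so $K(N) \geq N$, and the heart of the matter is to prove $K(N) - N \leq C$ for a constant $C$ independent of $N$. Applied also to $\tilde f^{-1}$, this yields $|\tilde f(k) - k| \leq C$ for all large $k \geq 0$; a symmetric argument on $\Z_{\leq 0}$ then gives $\sup_x |f(x) - x| < \infty$, and the more precise form $f(x) = x + \mbox{const} + r(x)$ follows by choosing the constant to be any limit value of $f(x) - x$ along $x \to +\infty$ (or simply zero, absorbing it into $r$).

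The main obstacle is the uniform bound $K(N) - N \leq C$. The naive Lipschitz bound only gives $K(N) \leq LN$, permitting linear drift. The sharper bound must use surjectivity together with the lower bi-Lipschitz inequality: each of the $K(N) - N$ indices $k \leq K(N)$ with $\tilde f(k) > N$ corresponds to a value $\tilde f(k)$ skipped past level $N$, and by surjectivity these skipped values must fill up the interval $(N, \max \tilde f([0, K(N)])]$; the lower bi-Lipschitz constant then forces this skipped block to have bounded length, giving the desired $C$. Making this pigeonhole accounting precise is the technical core of the argument.
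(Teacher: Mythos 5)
The paper does not prove this statement: it is quoted from Benjamini--Shamov \cite{BS} as an external ingredient, so there is no internal proof to compare against and your argument must stand on its own. Your Step 1 is correct and complete. The framework of Step 2 is also sound, modulo one small repair: for an end-preserving $f$ the two exceptional sets $\{k\ge 0: f(k)<0\}$ and $\{k<0: f(k)\ge 0\}$ need not have equal cardinality (consider $f(x)=x-1$), so $f$ cannot always be turned into a self-bijection of $\Z_{\ge 0}$ by a finite modification alone; you must first compose with a translation chosen so that the two counts agree, which is harmless for the conclusion. Granting that, your derivation of $\sup_k|\tilde f(k)-k|<\infty$ from the uniform bound $K(N)-N\le C$, applied to both $\tilde f$ and $\tilde f^{-1}$, is correct.

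The genuine gap is the bound $K(N)-N\le C$ itself, which you explicitly defer, and the mechanism you sketch for it fails. You assert that ``by surjectivity these skipped values must fill up the interval $(N,\max \tilde f([0,K(N)])]$''; this is false. Surjectivity only guarantees that each value in that interval is attained by \emph{some} index, and the missing ones can perfectly well be attained by indices larger than $K(N)$, so nothing forces the $K(N)-N$ values $\{\tilde f(k): k\le K(N),\ \tilde f(k)>N\}$ to form a block of bounded length. Chasing the Lipschitz constants along the lines you indicate only reproduces $K(N)\le LN+O(1)$, exactly the linear drift you need to exclude. A short argument that does close the gap (with $L$ a common bi-Lipschitz constant for $\tilde f$ and $\tilde f^{-1}$): if $\tilde f$ attains its maximum $M(n)$ on $[0,n]$ at $k^*$ with $[k^*-L,k^*+L]\subseteq[0,n]$, then $\tilde f^{-1}(M(n)+1)\in[k^*-L,k^*+L]\subseteq[0,n]$, contradicting maximality; hence $k^*<L$ or $k^*>n-L$. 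Since $M(n)\ge n$ (because $\tilde f([0,n])$ consists of $n+1$ distinct non-negative integers), the first alternative fails for large $n$, so $M(n)=\tilde f(k^*)\le\tilde f(n)+L^2$ and therefore $\tilde f(n)\ge M(n)-L^2\ge n-L^2$; applying the same inequality to $\tilde f^{-1}$ yields $\tilde f(n)\le n+L^2$ for large $n$. As written, your proposal correctly isolates the crux but does not prove it.
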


\subsection{Gottschalk-Hedlund theorem}
Another ingredient we need is the classical Gottschalk-Hedlund theorem \cite[Theorem 14.11]{GH}. We record the following version as presented in \cite[P. 102, Theorem 2.9.4]{KH} and explain how to use it in our setting. Note that the proof of \cite[P. 102, Theorem 2.9.4]{KH} still works without the metrizability assumption on the space.

\begin{thm}[Gottschalk-Hedlund]\label{thm: Gottschalk-Hedlund}
Let $X$ be a compact Hausdorff space, $f: X\to X$ a minimal continuous map and $g: X\to \mathbb{R}$ continuous such that $\sup_{n\in\N}|\sum_{i=0}^ng\circ f^i(x_0)|<\infty$ for some $x_0\in X$. Then there is a continuous $\phi: X\to\mathbb{R}$ such that $\phi\circ f-\phi=g$.
\end{thm}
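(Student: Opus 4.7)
The plan is to lift the dynamics to a skew product on $X\times\mathbb{R}$ and extract the desired function $\phi$ from a suitably chosen minimal set. Concretely, define the skew product
\[
F: X\times\mathbb{R}\to X\times\mathbb{R}, \qquad F(x,t)=(f(x),\, t+g(x)).
\]
The hypothesis says precisely that the forward $F$-orbit of $(x_0,0)$ is contained in $X\times[-M,M]$ for some $M>0$. Let $Y$ be the closure of this orbit; then $Y$ is a compact $F$-invariant subset of $X\times\mathbb{R}$.

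By Zorn's lemma applied to non-empty closed $F$-invariant subsets of $Y$, there exists a minimal such set $N\subseteq Y$ (which is automatically compact and contained in some slab $X\times[-C,C]$). The projection $\pi_1(N)\subseteq X$ is closed and $f$-invariant, so minimality of $f$ on $X$ forces $\pi_1(N)=X$. The core of the argument is then to show that $N$ is the graph of a continuous function $\phi:X\to\mathbb{R}$.

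The main obstacle is showing that each fiber $N_x:=\{t\in\mathbb{R}:(x,t)\in N\}$ is a singleton. For this, observe that for every $s\in\mathbb{R}$ the vertical translation $T_s(x,t)=(x,t+s)$ commutes with $F$, so $T_s(N)$ is again a minimal $F$-invariant compact set. Two minimal sets in the same ambient system are either equal or disjoint; hence if $N\cap T_s(N)\neq\emptyset$ then $N=T_s(N)$. Now suppose some fiber $N_x$ contained two distinct points $t_1<t_2$, and set $s:=t_2-t_1>0$. Then $(x,t_2)\in N\cap T_s(N)$, so $N=T_s(N)$, and iterating yields $N=T_{ns}(N)$ for every $n\in\mathbb{Z}$. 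This contradicts boundedness of $N$ in the $\mathbb{R}$-direction. Therefore each $N_x$ is a single point, giving a well-defined function $\phi:X\to\mathbb{R}$ with $N=\{(x,\phi(x)):x\in X\}$.

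Continuity of $\phi$ is automatic: $\pi_1|_N:N\to X$ is a continuous bijection from a compact space to a Hausdorff space, hence a homeomorphism, and $\phi=\pi_2\circ(\pi_1|_N)^{-1}$. Finally, $F$-invariance of $N$ applied to the point $(x,\phi(x))\in N$ yields $(f(x),\phi(x)+g(x))\in N$, which by the graph property forces $\phi(f(x))=\phi(x)+g(x)$, i.e.\ $\phi\circ f-\phi=g$. Note that compactness of $X$ and Zorn's lemma are all that is needed, so no metrizability hypothesis enters, matching the remark preceding the statement.
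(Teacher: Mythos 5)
Your proof is correct and complete. The paper itself does not prove this theorem; it only cites \cite{GH} and \cite{KH} and remarks that the proof in \cite{KH} goes through without metrizability. Your argument is the classical skew-product proof, which is essentially the one behind those references: every step checks out --- the hypothesis $\sup_n|\sum_{i=0}^n g\circ f^i(x_0)|<\infty$ is exactly boundedness of the forward $F$-orbit of $(x_0,0)$ in the $\mathbb{R}$-direction; Zorn's lemma (via the finite intersection property for chains of compact invariant sets) produces a minimal set $N$ with $F(N)=N$, and the equality $f(\pi_1(N))=\pi_1(N)$ together with minimality of $f$ gives $\pi_1(N)=X$; the vertical translations $T_s$ commute with $F$, carry minimal sets to minimal sets, and the disjoint-or-equal dichotomy for minimal sets combined with boundedness of $N$ forces each fiber to be a singleton; and the compact-to-Hausdorff continuous bijection $\pi_1|_N$ is a homeomorphism, yielding continuity of $\phi$ and the identity $\phi\circ f-\phi=g$. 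Note also that nowhere do you need $f$ to be invertible, so the statement's ``minimal continuous map'' hypothesis is honored, and nowhere do you use a metric, which substantiates precisely the remark the paper makes before the statement. One small presentational point: it is worth saying explicitly that a minimal set $N$ satisfies $F(N)=N$ (not merely $F(N)\subseteq N$), since that equality is what you use when you write $f(\pi_1(N))=\pi_1(N)$ and when you verify that $T_s(N)$ is again minimal.
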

Now, let $f: X\to X$ be a minimal continuous action and $c: \Z\times X\to \mathbb{R}$ be a continuous cocycle for this action. Set $g(x)=c(1, x)$, then a calculation shows that $\sum_{i=0}^ng\circ f^i(x_0)=c(n+1, x_0)$.
Thus, Gottschalk-Hedlund theorem actually shows that if $c$ is a bounded cocycle, i.e. $\sup_{(n, x)\in \Z\times X}|c(n, x)|<\infty$, then it is a continuous coboundary map. 

\section{existence of cocycles that are  not coboundaries}\label{section: existence of non-coboundaries}
For the proof of Theorem \ref{thm: - result}, we need to construct free minimal actions such that they admit continuous cocycles which are not coboundaries with values into finite groups. We collect some results along this direction.

\begin{proposition}\label{prop: existence of non-coboundaries for weak mixing actions}
Let $\Z\curvearrowright X$ be a minimal weakly mixing and continuous action on a compact metric space $X$. Then for each non-trivial group homomorphism $\phi$ from $\Z$ to $F$, where $F$ is a finite abelian group, $\phi$ is not a coboundary.
\end{proposition}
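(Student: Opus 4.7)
The plan is to argue by contradiction: suppose the constant cocycle $c_\phi(n, x) := \phi(n)$ is a coboundary, and derive that $\phi$ must be the trivial homomorphism. By the definition of coboundary, this means there exists a continuous $b : X \to F$ with
\[
\phi(n) = b(T^n x)^{-1}\, b(x) \quad \text{for all } n \in \Z,\ x \in X,
\]
where $T: X\to X$ generates the $\Z$-action. Taking $n = 1$ and using that $F$ is abelian, this specializes to $b(Tx) = \phi(1)^{-1} b(x)$.

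The main step will be to test against characters of $F$. For each $\chi \in \widehat{F}$, the function $\psi_\chi := \chi \circ b : X \to S^1$ is continuous and satisfies $\psi_\chi(Tx) = \chi(\phi(1))^{-1}\, \psi_\chi(x)$, i.e.\ it is a continuous $S^1$-valued eigenfunction of $T$ with eigenvalue $\chi(\phi(1))^{-1}$. I would then invoke the classical fact that a topologically weakly mixing continuous $\Z$-action admits no non-constant continuous $S^1$-valued eigenfunctions. The argument to recall is: the quotient $F_\chi(x,y) := \psi_\chi(x)\,\overline{\psi_\chi(y)}$ on $X\times X$ is continuous and $(T\times T)$-invariant (the eigenvalues cancel because $\psi_\chi$ takes values in $S^1$), so by transitivity of $T\times T$ together with continuity, $F_\chi$ must coincide with its value $1$ on the diagonal throughout $X\times X$. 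Hence $\psi_\chi$ is constant, and a nonzero constant can satisfy the eigenfunction equation only if the eigenvalue equals $1$, so $\chi(\phi(1)) = 1$.

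Since the characters of a finite abelian group separate points, the conclusion $\chi(\phi(1)) = 1$ for every $\chi \in \widehat{F}$ forces $\phi(1) = e_F$, contradicting the non-triviality of $\phi$. There is really no serious obstacle once one has the topological-weak-mixing to no-continuous-$S^1$-eigenfunction implication at hand, which is a standard and short consequence of transitivity of the product action. The restriction to abelian $F$ enters critically through the use of characters to diagonalise the problem; extending to non-abelian finite $F$ would require testing against irreducible unitary representations and handling the non-commutativity in the coboundary relation, which would be more delicate.
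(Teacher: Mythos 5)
Your proof is correct and rests on the same key step as the paper's: the continuous function $(x,y)\mapsto \psi_\chi(x)\overline{\psi_\chi(y)}$ on $X\times X$ is invariant under the product action, hence constant by transitivity and continuity, hence equal to its diagonal value $1$. The paper applies exactly this argument directly to $(x,y)\mapsto b(x)-b(y)$ with values in the finite (discrete) group $F$ itself, so your detour through eigenfunctions and characters of $F$ --- which amounts to composing that same two-variable invariant function with each $\chi\in\widehat{F}$ --- is harmless but unnecessary.
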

\begin{proof}
Assume that $\phi(s)=L(sx)-L(x)$ for some continuous map $L: X\to F$ and all $s\in \Z$ and all $x\in X$. Then $L(sx)-L(x)=L(sy)-L(y)$ for all $x, y\in X$. Then $L'(x, y):=L(x)-L(y)$ is a continuous map which is constant on each orbit of any point in $X\times X$. By assumption, we know that $L'$ is constant since the diagonal action $\Z\curvearrowright X\times X$ admits a point with dense orbit. Thus $L'(x, y)=L'(x, x)=0$ for all $x, y\in X$, i.e. $L$ is a constant function on $X$, hence $\phi(s)=0$ for all $s\in\Z$, i.e. $\phi$ is a trivial homomorphism, a contradiction.
\end{proof}

For the next result, we need the following known characterization of coboundaries using essential values for continuous cocycles in \cite{LM}, which was adapted from Schmidt's notion for measurable cocycles in \cite{Sc}.

Let $K$ be a locally compact group with the unit element $e$. By $K_{\infty}$ we denote the one point compactification of $K$, i.e. $K_{\infty}=K\cup \{\infty\}$. We may extend the group operation form the group $K$ onto the set $K_{\infty}$ by $g\cdot \infty=\infty$ for all $g\in K_{\infty}$. One can check that the operation $K_{\infty}\times K_{\infty}\ni (g_1, g_2)\to g_1g_2\in K_{\infty}$ is continuous. 

\begin{definition}[Definition 3.1 in \cite{LM}]
Let $T: \Z\curvearrowright X$ be a continuous action on a compact space $X$. Let $c: \Z\times X\to K$ be a continuous cocycle. We say that $r\in K_{\infty}$ is an \emph{essential value of $\phi$} if for each nonempty open $U\subset X$ and each neighborhood $V$ of $r$ there exists $N\in \Z$ such that 
\[U\cap T^{-N}U\cap \{x\in X: c(N, x)\in V\}\neq \emptyset.\] 
\end{definition}
The set of all essential values of $c$ is denote by $E_{\infty}(c)$. Put also $E(c)=E_{\infty}(c)\cap K$.

\begin{proposition}\label{prop: E(f) trivial iff f is a coboundary}
Let $\Z\curvearrowright X$ be a minimal action and $K$ be a compact abelain group, e.g. a finite abelian group. Let $c: \Z\times X\to K$ be a continuous cocycle. Then $E(c)=\{e\}$ iff $c$ is a coboundary, where $e$ denotes the neutral element in $K$.
\end{proposition}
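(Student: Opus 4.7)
The plan is to handle the two implications separately, with the ``only if'' direction carrying the real content.

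For the ``if'' direction, I would assume $c(n,x) = f(T^n x)f(x)^{-1}$ for some continuous $f:X\to K$ and show that no $r\in K\setminus\{e\}$ is an essential value. Given such an $r$, choose disjoint open neighborhoods $V\ni r$ and $W\ni e$, then a symmetric open $W'\ni e$ with $W'W'\subset W$, and finally a nonempty open $U\subset X$ with $f(U)\subset f(x_0)W'$ for some $x_0\in X$; this forces $f(U)f(U)^{-1}\subset W$. For every $N$ with $U\cap T^{-N}U\neq\emptyset$, any $x$ in the intersection satisfies $c(N,x)=f(T^Nx)f(x)^{-1}\in W$, which is disjoint from $V$. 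Hence $r\notin E(c)$, and since $e\in E(c)$ holds trivially (take $N=0$), we get $E(c)=\{e\}$.

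For the ``only if'' direction, I plan to extract the conclusion from two structural properties of essential values that are the continuous analogues of Schmidt's classical facts from \cite{Sc}, developed in the topological setting in \cite{LM}: (a) $E(c)$ is a closed subgroup of $K$, and (b) $c$ is continuously cohomologous to a cocycle whose image lies in the closed subgroup $E(c)$. Granting (a) and (b), the hypothesis $E(c)=\{e\}$ forces $c$ to be cohomologous to the trivial cocycle, i.e., $c$ is a coboundary.

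The substantive input is (b), and the route I would follow passes through the skew product $\tilde T:X\times K\to X\times K$, $\tilde T(x,k)=(Tx,c(1,x)k)$, on which $K$ acts by right translation, commuting with $\tilde T$. Pick any $\tilde T$-minimal set $Y\subset X\times K$ and let $H=\{k\in K:Yk=Y\}$, a closed subgroup of $K$. Minimality of $T$ makes the projection $\pi:Y\to X$ surjective, and a commutation-plus-minimality argument forces the fibers of $\pi$ to be right $H$-cosets, so $Y$ is the graph of a continuous map $X\to K/H$. The key identification is then $H=E(c)$: a translation by $k$ preserves $Y$ exactly when the $\tilde T$-dynamics is recurrent in the direction $k$, which is exactly the defining condition $k\in E(c)$. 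Under $E(c)=\{e\}$ this yields $H=\{e\}$, so $Y$ is the graph of a continuous $f:X\to K$, and $\tilde T$-invariance of $Y$ reads $c(1,x)=f(Tx)f(x)^{-1}$, which is the desired coboundary presentation. The hard part I anticipate is making the identification $H=E(c)$ fully rigorous in this continuous (rather than measurable) category, which is precisely what the framework of \cite{LM} is designed for.
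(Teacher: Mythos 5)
Your proposal is correct and follows the same route as the paper: the paper disposes of the ``if'' direction by exactly the direct check you carry out, and for the ``only if'' direction it simply cites \cite{LM}*{Lemma 3.1(ii)}, whose standard proof is the skew-product/minimal-set argument you sketch. The only refinement worth noting is that you do not need the full identification $H=E(c)$: the inclusion $H\subseteq E(c)$ (which follows easily from minimality of $Y$, since the $\tilde T$-orbit of $(x,g)\in Y\cap(U\times K)$ must enter the open set $Y\cap(U\times gV)$ containing $(x,gk)$) already gives $H=\{e\}$ and hence the graph, so the harder half of the identification can be skipped.
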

\begin{proof}
$\Rightarrow$: this is Lemma 3.1 (ii) in \cite{LM}. $\Leftarrow$: one can check this directly using the definition of $E(c)$.   
\end{proof}
The following result might be known to experts, since we do not find a reference, we include the proof.
\begin{proposition}\label{prop: existence of non-coboundaries for odometers}
Let $(\Z, X)$ be a minimal odometer action, say $X=\lim\limits_{\leftarrow}\Z/{n_i\Z}$, where $n_i\mid n_{j}$ for all $i\leq j$ and $\lim_{j\to\infty}n_j=\infty$. Let $p$ be a prime number. Then the following statements are equivalent.\\
(1) Every continuous map $f: X\to \Z/{p\Z}$ gives rise to a continuous coboundary.\\
(2) $\sup_iord(p, n_i)=\infty$, where $ord(p, n_i):=\max_{p^k\mid n_i}k$.
\end{proposition}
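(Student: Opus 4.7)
The plan is to exploit that $X=\lim\limits_{\leftarrow}\Z/n_i\Z$ is profinite, so that every continuous $f\colon X\to\Z/p\Z$ factors through one of the finite quotients; this reduces the coboundary question to a modular-arithmetic problem on $\Z/n_j\Z$.

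First, by compactness of $X$ and discreteness of $\Z/p\Z$, any continuous $f\colon X\to\Z/p\Z$ has the form $f(x)=\bar f(x_i)$ for some $i$ and some $\bar f\colon \Z/n_i\Z\to\Z/p\Z$; likewise any continuous candidate untwister $b\colon X\to\Z/p\Z$ factors as $b(x)=\bar b(x_j)$ for some $j\geq i$. Under these identifications the coboundary relation $f(x)=b(\alpha_1 x)-b(x)$ on $X$ becomes the identity $\bar f(m\bmod n_i)=\bar b(m+1)-\bar b(m)$ on $\Z/n_j\Z$.

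Next, set $T(\bar f):=\sum_{k=0}^{n_i-1}\bar f(k)\in\Z/p\Z$. Summing the telescoping identity above over $m\in\Z/n_j\Z$, the right-hand side vanishes while the left collapses to $(n_j/n_i)\,T(\bar f)\bmod p$. Hence the existence of $\bar b$ forces the divisibility
\[
(n_j/n_i)\,T(\bar f)\equiv 0\pmod p.
\]
Conversely, whenever this divisibility holds for some $j\geq i$, the explicit partial-sum formula $\bar b(m):=\sum_{k=0}^{m-1}\bar f(k\bmod n_i)\pmod p$ for $0\leq m<n_j$ descends to a well-defined function on $\Z/n_j\Z$ and yields the desired untwister.

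Combining the two halves, if (2) holds, then for each $i$ one can pick $j\geq i$ with $\operatorname{ord}(p,n_j)>\operatorname{ord}(p,n_i)$, so that $p\mid n_j/n_i$ and the divisibility condition is automatic regardless of $T(\bar f)$; hence every $f$ is a coboundary. Conversely, if (2) fails, let $k:=\sup_i\operatorname{ord}(p,n_i)<\infty$ and fix $i_0$ with $\operatorname{ord}(p,n_{i_0})=k$; then $p\nmid n_j/n_{i_0}$ for every $j\geq i_0$, and choosing $\bar f=\mathbf{1}_{\{0\}}$ on $\Z/n_{i_0}\Z$ gives $T(\bar f)=1$, so the divisibility fails for all $j\geq i_0$ and the associated $f$ is not a coboundary. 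The only somewhat delicate point will be the descent check in the second step, namely that the naive partial-sum formula for $\bar b$ passes from $\{0,1,\dots,n_j-1\}$ to $\Z/n_j\Z$ exactly when the divisibility condition holds; this is a routine modular-arithmetic verification and presents no conceptual obstacle.
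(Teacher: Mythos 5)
Your argument is correct, and it takes a genuinely different route from the paper. The paper first invokes the essential-value characterization of coboundaries for minimal $\Z$-actions (Proposition \ref{prop: E(f) trivial iff f is a coboundary}, following Lema\'nczyk--Mentzen and Schmidt): it shows $E(f)=\{0\}$ fails or holds by exhibiting cylinder sets $U=\pi_k^{-1}(i_0+n_k\Z)$ and return times $n$ with $n_k\mid n$, and then performs essentially the same Birkhoff-sum computation $\sum_{s=0}^{n-1}f'(x_j+s+n_j\Z)=\frac{n}{n_j}\sum_{s=0}^{n_j-1}f'(s+n_j\Z)$ that you carry out. You bypass the essential-value machinery entirely by using that both $f$ and any putative untwister $b$ factor through finite quotients (continuity plus compactness plus discreteness of $\Z/p\Z$), which converts the coboundary equation into the finite telescoping identity on $\Z/n_j\Z$ and yields the clean exact criterion: $f$ factoring through level $i$ is a coboundary iff $(n_j/n_i)\,T(\bar f)\equiv 0\pmod p$ for some $j\geq i$. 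This is more elementary and self-contained (no appeal to \cite{LM}), and it gives slightly more information than the proposition asks for; the paper's route, on the other hand, fits the statement into a general framework (essential values) that is reusable for non-profinite minimal systems where the factoring-through-finite-quotients step is unavailable. Your flagged ``delicate point'' --- the descent of the partial-sum formula at the wrap-around $m=n_j-1\mapsto 0$ --- is indeed exactly where the divisibility $(n_j/n_i)T(\bar f)\equiv 0$ is used, and the verification goes through as you indicate; one should also note (as is immediate from $n_{j'}/n_i=(n_{j'}/n_j)(n_j/n_i)$) that the divisibility condition is monotone in $j$ and independent of the level at which $f$ is represented, so the necessary condition extracted from one particular $j$ and the sufficient condition for some $j$ match up correctly.
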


\begin{proof}
(1)$\Rightarrow$(2): By Proposition \ref{prop: E(f) trivial iff f is a coboundary}, we know that for all continuous $f: X\to \Z/{p\Z}$, $E(f)=\{0\}$, i.e. for all $0\neq c\in \Z/{p\Z}$, there exists some non-empty open set $U\subset X$ such that for all $n\in\Z$, we have $x\in U\cap T^{-n}U\Rightarrow f(x)+f(Tx)+\cdots+f(T^{n-1}x)\neq c$. 

Now, denote by $\pi_i: X\to \Z/{n_i\Z}$ the natural projection map, i.e. $\pi_i((x_i+n_i\Z)_i)=x_i+n_i\Z$. Assume that $\sup_iord(p, n_i)$ is bounded, let us take some $j$ such that $ord(p, n_j)=\sup_iord(p, n_i)$ and we may assume $n_j>1$, then we define a map $f: X\to \Z/{p\Z}$ as the composition $X\overset{\pi_j}{\to} \Z/{n_j\Z}\overset{f'}{\to} \Z/{p\Z}$, where $f'(i+n_j\Z)=\delta_{i, 1}\in \Z/{p\Z}$ for all $0\leq i\leq n_1-1$. Take $c=1\in \Z/{p\Z}$, then we find some $U\subset X$ as above. We may shrink $U$ if necessary to assume $U=\pi_k^{-1}(i_0+n_k\Z)$ for some $k>j$ and $i_0$. Now, take any $x=(x_i+n_i\Z)_i\in U$ and $n=n_k\ell$ for some integer $\ell$ such that $\frac{n_k}{n_j}\ell\equiv 1~mod~p$ (This is possible since $p\nmid \frac{n_k}{n_j}$ by our choice of $j$ and $p$ is a prime number). Note that $n_k\mid n$ implies that $T^nx\in U$. Now, a calculation shows that
\begin{align*}
f(x)+f(Tx)+\cdots+f(T^{n-1}x)&=\sum_{s=0}^{n-1}f'(x_j+s+n_j\Z)\\
&=1\cdot (\frac{n}{n_j})=\frac{n_k\ell}{n_j}=1=c\in \Z/{p\Z}.
\end{align*}
This is a contradiction.

(2)$\Rightarrow$(1): the proof is similar as above. By Proposition \ref{prop: E(f) trivial iff f is a coboundary}, we aim to show that for all continuous $f: X\to \Z/{p\Z}$ and all $0\neq c\in \Z/{p\Z}$, there exists some non-empty set $U\subset X$ such that for all $n\in \Z$,  $x\in U\cap T^{-n}U\Rightarrow f(x)+f(Tx)+\cdots+f(T^{n-1}x)\neq c$. 

Pick $j$ large enough such that $f$ factors through $\pi_j: X\to \Z/{n_j\Z}$, i.e. $f=\pi_j\circ f'$ for some map $f': \Z/{n_j\Z}\to \Z/{p\Z}$. Since $\sup_iord(p, n_i)=\infty$, we may find $k>j$ such that $ord(p, n_k)>ord(p, n_j)$. Then let $U=\pi_k^{-1}(0+n_k\Z)$. Now, for each $n\in \Z$, if $x\in U\cap T^{-n}U$, then $x_k=0=x_k+n$ in $\Z/{n_k\Z}$, so $n_k\mid n$. Now, a calculation shows that
\begin{align*}
&f(x)+f(Tx)+\cdots+f(T^{n-1}x)\\
&=\sum_{s=0}^{n-1}f'(x_j+s+n_j\Z)\\
&=(\sum_{s=0}^{n_j-1}f'(s+n_j\Z))\frac{n}{n_j}\\
&=[(\sum_{s=0}^{n_j-1}f'(s+n_j\Z))\frac{n}{n_k}]\frac{n_k}{n_j}\in p\Z~\mbox{(since $ord(p, n_k)>ord(p, n_j)$)}.
\end{align*}
Thus, $0=f(x)+f(Tx)+\cdots+f(T^{n-1}x)\neq c$ in $\Z/{p\Z}$.
\end{proof}
From the above proposition, we can deduce that for the free minimal odometer action $\Z\curvearrowright \lim\limits_{\leftarrow}\frac{\Z}{2^i\Z}:=X$, it admits a continuous $\Z/{3\Z}$-valued cocycle that is not a coboundary.
\section{proofs}\label{section: proofs}

\subsection{Non-rigidity part}\label{subsection:-part}
Theorem \ref{thm: - result} is a direct corollary of the following proposition.
\begin{proposition}\label{prop: coe but not conjugacy}
Let $\alpha: \Z\curvearrowright X$ be a minimal free action on a compact Hausdorff space. Let $c, c': \Z\times X\to F$ be two continuous cocycles (w.r.t. the action $\alpha$) into a finite non-trivial group $F$. Consider the associated skew product actions $\widetilde{\alpha}: F\times \Z\curvearrowright X\times_cF$ and  $\widetilde{\alpha}': F\times \Z\curvearrowright X\times_{c'}F$ as defined in Subsection \ref{subsection: definition of skew product actions}. Then the following hold:\\
(i) Both $\widetilde{\alpha}$ and $\widetilde{\alpha}'$ are minimal free actions.\\
(ii) $\widetilde{\alpha}\overset{coe}{\sim}\widetilde{\alpha}'$.\\
(iii) If $c'\equiv e\in F$, the neutral element in $F$, then $\widetilde{\alpha}\overset{conj.}{\sim}\widetilde{\alpha}'$ implies that $c$ is cohomologous to a group homomorphism from $\Z$ into $C(F)$.\\
(iv) For each $F$ with trivial center, there exist $\alpha$, $c$ and $c'$ such that $\widetilde{\alpha}\overset{conj.}{\not\sim}\widetilde{\alpha}'$.
\end{proposition}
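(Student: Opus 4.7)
Parts (i) and (ii) I would dispose of by direct computation. The identity
\[
\widetilde\alpha_{(k,n)}(x,k')=(\alpha_n x,\,c(n,x)\,k'\,k^{-1})
\]
shows that the $(F\times\Z)$-orbit of any $(x,k')$ equals $(\Z\cdot x)\times F$, which is dense by minimality of $\alpha$; freeness of the skew product is immediate from freeness of $\alpha$ together with $c(0,\cdot)\equiv e$. For (ii), the same computation shows the orbit relations of $\widetilde\alpha$ and $\widetilde\alpha'$ on $X\times F$ coincide, so $\phi:=\mathrm{id}_{X\times F}$ is a continuous orbit equivalence; solving $\widetilde\alpha'_{(k'',n)}(x,k')=\widetilde\alpha_{(k,n)}(x,k')$ for $k''$ yields the continuous orbit cocycle
\[
a\bigl((k,n),(x,k')\bigr)=\bigl(k\,(k')^{-1}\,c(n,x)^{-1}\,c'(n,x)\,k',\,n\bigr),
\]
with a symmetric inverse cocycle.

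The main content is (iii). Assume $\widetilde\alpha\overset{conj.}{\sim}\widetilde\alpha'$, witnessed by a homeomorphism $\phi:X\times F\to X\times F$ and a group isomorphism $\sigma\in\Aut(F\times\Z)$. The first step is purely algebraic: since $F\times\{0\}$ is the torsion subgroup of $F\times\Z$, $\sigma$ restricts to an automorphism $\tau\in\Aut(F)$, and on the $\Z$-quotient $\sigma$ induces $\pm\mathrm{id}$, so $\sigma(0,1)=(f_0,\epsilon)$ for some $f_0\in F$ and $\epsilon\in\{\pm1\}$. Expanding $\sigma((k,0)(0,1))=\sigma((0,1)(k,0))$ then forces $f_0\in C(F)$. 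Plugging $g=(k,0)$ into the intertwining equation $\phi\circ\widetilde\alpha_g=\widetilde\alpha'_{\sigma(g)}\circ\phi$ pins the homeomorphism to the form $\phi(x,k)=(\psi(x),\,b(x)\tau(k))$ for continuous $\psi:X\to X$ and $b:X\to F$. Feeding this into the intertwining equation for $g=(e,1)$, using $c'\equiv e$ and the centrality of $f_0$, the second coordinate collapses (after cancelling $\tau(k')$) to
\[
\tau(c(1,x))=b(\alpha x)^{-1}\,b(x)\,f_0^{-1}.
\]
Applying $\tau^{-1}$, setting $\tilde b:=\tau^{-1}\circ b$, and moving $\tau^{-1}(f_0)^{-1}\in C(F)$ past $\tilde b(x)$ rewrites this as $c(1,x)=\tilde b(\alpha x)^{-1}\,\tau^{-1}(f_0)^{-1}\,\tilde b(x)$, exhibiting $c$ as cohomologous to the homomorphism $n\mapsto\tau^{-1}(f_0)^{-n}$ from $\Z$ to $C(F)$. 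I expect the main bookkeeping obstacle to be keeping the $\tau$-twist and the sign $\epsilon$ consistent throughout, and in particular invoking the centrality of $f_0$ at exactly the two places where it is needed (the cancellation of $\tau(k')$ and the final move of $\tau^{-1}(f_0)^{-1}$ across $\tilde b(x)$).

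Part (iv) then reduces to exhibiting a bad cocycle. When $C(F)$ is trivial, (iii) implies that any conjugacy between $\widetilde\alpha$ and $\widetilde\alpha'$ (with $c'\equiv e$) would force $c$ itself to be a coboundary in $F$, so it suffices to produce a minimal free $\Z$-action $\alpha$ and a continuous cocycle $c$ that is not a coboundary. I would take $\alpha$ to be any minimal free weakly mixing $\Z$-action (e.g.\ the Chac\'on system) and, fixing any $f\in F\setminus\{e\}$, let $c$ be the constant cocycle $c(n,x):=f^n$. This cocycle takes values in the finite abelian subgroup $\langle f\rangle\leq F$ and is the nontrivial homomorphism $\Z\to\langle f\rangle$, so by Proposition \ref{prop: existence of non-coboundaries for weak mixing actions} it is not a coboundary in $\langle f\rangle$, and by Lemma \ref{lem: independent of target group for being coboundaries} (applied to the closed subgroup $\langle f\rangle\subset F$) it is not a coboundary in $F$ either. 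Combined with (ii), this yields $\widetilde\alpha\overset{coe}{\sim}\widetilde\alpha'$ but $\widetilde\alpha\overset{conj.}{\not\sim}\widetilde\alpha'$, completing (iv).
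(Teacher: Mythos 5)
Your proposal is correct and follows essentially the same route as the paper: the same orbit cocycle $(k(k')^{-1}c(n,x)^{-1}c'(n,x)k',n)$ for (ii), the same analysis of $\Aut(F\times\Z)$ (torsion subgroup plus a central element $f_0$) and the same reduction of the conjugacy to a cohomology identity for (iii), and the same combination of Proposition \ref{prop: existence of non-coboundaries for weak mixing actions} with Lemma \ref{lem: independent of target group for being coboundaries} for (iv). The only (immaterial) difference is that in (iv) you work with the cyclic subgroup $\langle f\rangle$ generated by an arbitrary non-trivial element rather than passing through Cauchy's theorem to a subgroup of prime order, which the paper does only so that its odometer construction is also available as an alternative.
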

\begin{proof}
(i) We leave it as an exercise.\\
(ii) Recall that $\widetilde{\alpha}: F\times \Z\curvearrowright X\times_cF$ is defined as follows:
\[\widetilde{\alpha}_{(f, n)}(x, f')=(\alpha_n(x), c(n, x)f'f^{-1})~\mbox{for all $f$, $f'\in F$, $n\in \Z$ and $x\in X$}.\]
Define $\theta: (F\times \Z)\times (X\times_cF)\to F\times \Z$ by setting
\[\theta((t, n), (x, f))=(tf^{-1}c(n, x)^{-1}c'(n, x)f, n)~\mbox{for all $n\in \Z$, $t\in F$ and $x\in X$}.\]
One can check it is a continuous cocycle w.r.t. the skew product action $\widetilde{\alpha}$. Indeed, 
\begin{align*}
\theta((t_1, n_1)(t_2, n_2), (x, f))
&=\theta((t_1t_2, n_1+n_2), (x, f))\\
&=(t_1t_2f^{-1}c(n_1+n_2, x)^{-1}c'(n_1+n_2, x)f, n_1+n_2)
\end{align*}
\begin{align*}
&\theta((t_1, n_1), \widetilde{\alpha}_{(t_2, n_2)}(x, f))\theta((t_2, n_2), (x, f))\\
&=\theta((t_1, n_1), (\alpha_{n_2}(x), c(n_2, x)ft_2^{-1}))\theta((t_2, n_2), (x, f))\\
&=(t_1t_2f^{-1}c(n_2, x)^{-1}c(n_1, \alpha_{n_2}(x))^{-1}c'(n_1, \alpha_{n_2}(x))c(n_2, x)ft_2^{-1},n_1)\cdot\\&\quad\quad(t_2f^{-1}c(n_2, x)^{-1}c'(n_2, x)f,n_2)\\
&=(t_1t_2f^{-1}c(n_2, x)^{-1}c(n_1, \alpha_{n_2}(x))^{-1}c'(n_1, \alpha_{n_2}(x))c'(n_2, x)f, n_1+n_2)\\
&=(t_1t_2f^{-1}c(n_1+n_2, x)^{-1}c'(n_1+n_2, x)f,n_1+n_2).
\end{align*}
Thus, 
\begin{align*}
\theta((t_1, n_1)(t_2, n_2), (x, f))=\theta((t_1, n_1), \widetilde{\alpha}_{(t_2, n_2)}(x, f))\theta((t_2, n_2), (x, f)).
\end{align*}
Moreover, one can verify that
\[\widetilde{\alpha}_{(t,n)}(x, f)=\widetilde{\alpha}'_{\theta((t, n), (x, f))}(x, f).\]

By symmetry, one can also define the inverse cocycle for $\theta$, hence $\widetilde{\alpha}\overset{coe}{\sim}\widetilde{\alpha}'$.

(iii) First, we observe that for each $\Phi\in Aut(F\times \Z)$, we have $\Phi(t, 1)=(\epsilon(t)g,\pm 1)$ for some $\epsilon\in Aut(F)$ and $g\in C(F)$, the center of $F$. Indeed, since $F$ is finite, $\Phi(F\times \{0\})=F\times \{0\}$, thus we define $\epsilon=\Phi|_{F\times \{0\}}$. Next, we may write $\Phi(e, 1)=(g, \pm 1)$ for some $g\in F$. From $\Phi((e, 1)(t, 0))=\Phi((t, 0)(e, 1))$ for all $t\in F$, we deduce that $g\in C(F)$. Note that $\Phi(t,n)=(\epsilon(t)g^n, \pm n)$ for all $n\in\Z$ and $t\in F$. For such $\Phi$, we simply write $\Phi=(\epsilon(\cdot)g, \pm)$.

Next, assume $\widetilde{\alpha}$ is conjugate to $\widetilde{\alpha}'$, say via a homeomorphism $\tau: X\times_cF\to X\times_{c'}F$ and a group isomorphism $(\epsilon(\cdot)g, \pm): F\times \Z\cong F\times \Z$ defined in the last paragraph. Then we may write
\[\tau((x, f'))=(\phi(x,f'),\psi(x, f'))\] for some continuous maps $\phi: X\times_cF\to X$ and $\psi: X\times_c F\to F$.

Fix any $(x, f')\in X\times_cF$ and $(f, n)\in F\times \Z$, the $F\times \Z$-equivariance of $\tau$ shows that for all $x\in X$, $f$, $f'\in F$ and all $n\in \Z$, we have
\begin{align}\label{eq: test conjugacy}
\begin{split}
\alpha_{\pm n}(\phi(x, f'))&=\phi(\alpha_n(x), c(n, x)f'f^{-1})\\
c'(\pm n, \phi(x, f'))\psi(x, f')g^{-n}\epsilon(f)^{-1}&=\psi(\alpha_n(x), c(n, x)f'f^{-1}).
\end{split}
\end{align} 
Since $f$ is arbitrary in the 1st expression in \eqref{eq: test conjugacy}, we deduce that $\phi$ only depends on its 1st coordinate, so we may directly write $\phi(x,-)=\phi(x)$ and hence $\alpha_{\pm n}(\phi(x))=\phi(\alpha_n(x))$. Using $\tau$ is a homeomorphism, we can check that $\phi|_X$ is also a homeomorphism.

Now, set $f=c(n, x)$ and $f'=e$ into the 2nd line in \eqref{eq: test conjugacy}, we get that 
\[c'(\pm n, \phi(x))\psi(x, e)g^{-n}\epsilon(c(n, x))^{-1}=\psi(\alpha_n(x), e).\] Thus,
\begin{align}\label{eq: middle formula}
c'(\pm n, \phi(x))=\psi(\alpha_n(x), e)\epsilon(c(n, x))g^n\psi(x, e)^{-1}.
\end{align}

Since $\alpha_{\pm n}(\phi(x))=\phi(\alpha_n(x))$,  \eqref{eq: middle formula} may be rewritten as 
\begin{align*}
c'(\pm n, x)=\psi(\phi^{-1}(\alpha_{\pm n}(x)), e)\epsilon(c(n, \phi^{-1}(x)))g^n\psi(\phi^{-1}(x), e)^{-1}.
\end{align*} or simply
\begin{align}\label{eq: final task}
c'(n, x)=\psi'(\alpha_n(x))\epsilon(c(\pm n, \phi^{-1}(x)))g^{\pm n}\psi'(x)^{-1}~\mbox{for all $n\in \Z$ and $x\in X$,}
\end{align}
where $\psi'(x):=\psi(\phi^{-1}(x), e)$.

Now, let $c\equiv e\in F$, the neutral element in $F$, then \eqref{eq: final task} is the same as saying $c'$ is cohomologous to a group homomorphism $\Z\ni n\mapsto g^{\pm n}\in C(F)$.

(iv) Set $c\equiv e$. We are left to construct $\alpha$  and $c$ such that $\widetilde{\alpha}\overset{conj.}{\not\sim}\widetilde{\alpha}'$. It suffices to make sure the above identity \eqref{eq: final task} fails. 

Since $C(F)$ is trivial, we deduce $g=e$. It suffices to find some $\alpha$ and a cocycle $c': \Z\times X\to F$ (w.r.t. $\alpha$) which is not a coboundary. 

By Cauchy theorem, we may find some non-trivial $t\in F$ such that $\Z/{p\Z}\cong \langle t\rangle\subset F$ for some prime number $p$. By Lemma \ref{lem: independent of target group for being coboundaries}, it suffices to find some $\alpha$ and a continuous cocycle $c': \Z\times X\to \Z/{p\Z}$ which is not a coboundary.

We can apply 
Proposition \ref{prop: existence of non-coboundaries for weak mixing actions} or Proposition \ref{prop: existence of non-coboundaries for odometers} to construct a suitable free minimal and weakly mixing or free minimal odometer action $\Z\curvearrowright X$ and a continuous cocycle $c$ which is not a coboundary. 
\end{proof}

\subsection{Rigidity part}\label{subsection: +part}
We are ready to prove Theorem \ref{thm: + result}.
\begin{proof}[Proof of Theorem \ref{thm: + result}]

Let $D_{\infty}=\Z\rtimes \frac{\Z}{2\Z}=\langle s\rangle\rtimes \langle t\rangle$. Denote by $S$ the symmetric generating set $\{s^{\pm}, t^{\pm}\}$ for $D_{\infty}$. Denote by $|\cdot|_S$ the word length on $G$ with respect to $S$. Note that $|s^nt|_S=n+1$ and $|s^n|_S=n$. Let $d(\cdot,\cdot)$ be the right-invariant word metric on $D_{\infty}$ with respect to $S$, i.e. $d(g_1, g_2):=|g_2g_1^{-1}|_S$.

Let $c: D_{\infty}\times X\to D_{\infty}$ be the continuous orbit cocycle associated to a given continuous orbit equivalence.

\textbf{Claim 1:} for all $x\in X$, either $\sup_{n\in \Z}d(c(s^n, x), s^n)<\infty$ or $\sup_{n\in \Z}d(c(s^n, x), s^{-n})<\infty$.

\begin{proof}[Proof of Claim 1]
Define $\pi: \Z\to D_{\infty}$ by setting $\pi(s^{2n})=s^n$ and $\pi(s^{2n+1})=ts^n$ for all $n\in \Z$. One can verify that $\pi$ is a bi-Lipschitz bijection. In fact, $\frac{|n-m|}{2}\leq d(\pi(s^n),\pi(s^m))\leq 2|n-m|$ for all $n$, $m\in \Z$. 

Fix any $x\in X$, write $\phi_x(g)=c(g, x)$ for all $g\in D_{\infty}$. Define $\phi: \Z\to \Z$ by setting $\phi=\pi^{-1}\circ \phi_x\circ \pi$. Clearly, $\phi$ is again a bi-Lipschitz bijection by Proposition \ref{prop: from coe to bi-Lipschitz maps}. 

From Theorem \ref{thm: bi-Lipschitz bijection of Z}, we deduce that either $\sup_{n\in\Z}|\phi(n)-n|<\infty$ or $\sup_{n\in\Z}|\phi(n)+n|<\infty$. This implies Claim 1 holds.
\end{proof}

Define $X_+=\{x\in X: \sup_{n\in\Z}d(c(s^n, x), s^n)<\infty\}$ and $X_-=\{x\in X: \sup_{n\in\Z}d(c(s^n, x), s^{-n})<\infty\}$. Clearly, $X_+\cap X_-=\emptyset$ since $s$ has infinite order. From Claim 1, we know that $X=X_+\sqcup X_-$.

\textbf{Claim 2}: Both $X_+$ and $X_-$ are clopen subsets of $X$.

\begin{proof}[Proof of Claim 2]
We follow the idea while dealing with $\Z$-actions in \cite{BT}. For each $r\geq 1$, define $B(r):=\{ts^i, s^i:~|i|\leq r\}\subset D_{\infty}$. Note that $B(r)\nearrow D_{\infty}$ as $r\to\infty$.

First, for all $N>0$, there exists $K>0$ such that $B(N)\subset c(B(K), x)$ for all $x\in X$. Indeed, since $D_{\infty}\ni g\mapsto c(g, x)\in D_{\infty}$ is a bijection, we may find $K_x>0$ such that $B(N)\subset c(B(K_x), x)$ for all $x\in X$. Moreover, by continuity of $c$, we may find a small open neighborhood $V_x\ni x$ such that $B(N)\subset c(B(K_x), y)$ for all $y\in V_x$. Since $X=\cup_{x\in X}V_x$, compactness of $X$ implies we may find a finite subcover $X=\cup_{i=1}^nV_{x_i}$, set $K=\max_{1\leq i\leq n}K_{x_i}$. 

Let $N:=\max_{x\in X,g\in S}|c(g, x)|$, and $K$ as above for this $N$. Then for all $x\in X$,
either 
\[c(s^{>K}, x)\subseteq \{s^{>0}, ts^{>0}\}~\mbox{and}~c(s^{<-K}, x)\subseteq \{s^{<0}, ts^{<0}\}~\mbox{or}\]
\[c(s^{>K}, x)\subseteq \{s^{<0}, ts^{<0}\}~\mbox{and}~c(s^{<-K}, x)\subseteq \{s^{>0}, ts^{>0}\}.\] Here, $c(s^{>K}, x):=\{c(s^i, x): i>K\}$, and $c(s^{-K}, x)$ is similarly defined; $\{s^{>0}, ts^{>0}\}:=\{s^i, ts^j: i>0,~ j>0\}$ and $\{s^{<0}, ts^{<0}\}$ is similarly defined.

To see this, for each $|k|>K$, we have $c(s^k, x)\not\in B(N)$ as $c(-,x)$ is bijective. Since $|c(s^{\pm 1}, s^kx)|\leq N$, from
$c(s^{\pm 1+k}, x)=c(s^{\pm 1}, s^kx)c(s^k, x)$, we deduce that $c(s^{\pm 1+k},x)$ and $c(s^k, x)$ must have the same sign for the $s$-exponent when writing them as elements in $t\Z=t\langle  s\rangle$ or $\Z=\langle  s\rangle$. Then apply Claim 1 to see that if $c(s^{>K}, x)\subseteq \{s^{>0}, ts^{>0}\}$ holds, then $x\in X_+$ and thus $c(s^{-K}, x)\subseteq \{s^{<0}, ts^{<0}\}$ is automatic. 

Now, denote by $X_+':=\{x\in X: c(s^{>K}, x)\subseteq \{s^{>0}, ts^{>0}\}~\mbox{and}~c(s^{<-K}, x)\subseteq \{s^{<0}, ts^{<0}\}\}$ and $X_-':=\{x\in X: c(s^{>K}, x)\subseteq \{s^{<0}, ts^{<0}\}~\mbox{and}~c(s^{<-K}, x)\subseteq\{s^{>0}, ts^{>0}\}\}$.
  
From above, we have shown $X_+'\sqcup X_-'=X=X_+\sqcup X_-$. It is easy to see that $X_+'=X_+$ and $X_-'=X_-$. Hence, to finish the proof, we just need to observe that both $X_+'$ and $X_-'$ are closed as $c$ is continuous and $D_{\infty}$ is countable and discrete.  
\end{proof}

Define $a: \Z\times X\to D_{\infty}$ by setting 
\begin{align*}
a(s^n, x)=\begin{cases}
c(s^n, x)s^{-n},&~\mbox{if $x\in X_+$}\\
c(s^n, x)s^n,&~\mbox{if $x\in X_-$}.
\end{cases}
\end{align*}

\textbf{Claim 3:} $a$ is a continuous cocycle taking finitely many values as $n$ changes.

\begin{proof}[Proof of Claim 3]
From Claim 1, we know $a$ takes only finitely many values. 

Fix any $x\in X_+$ and $n\in \Z$. We have 
\begin{align*}
a(s^{m+n}, x)=c(s^{m+n}, x)s^{-m-n}=c(s^m, s^nx)c(s^n, x)s^{-n}s^{-m}\\
=\begin{cases}
a(s^m, s^nx)s^ma(s^n, x)s^{-m},~&\mbox{if $s^nx\in X_+$}\\
a(s^m, s^nx)s^{-m}a(s^n, x)s^{-m},~&\mbox{if $s^nx\in X_-$}.
\end{cases}
\end{align*}

Case 1: $s^nx\in X_+$.

Since both $a(s^{m+n}, x)$ and $a(s^m, s^nx)$ lie in a finite set as $m$ changes, we deduce that $\{s^ma(s^n, x)s^{-m}: m\in \Z\}$ is a finite set. Thus, we have $a(s^n, x)\in \Z$ and thus $a(s^{m+n}, x)=a(s^m, s^nx)a(s^n, x)$.

Case 2: $s^nx\in X_-$.

Since both $a(s^{m+n}, x)$ and $a(s^m, s^nx)$ lie in a finite set as $m$ changes, we deduce that $\{s^{-m}a(s^n, x)s^{-m}: m\in \Z\}$ is a finite set. Thus, $a(s^n, x)\in \Z t$. Once again, this implies that $a(s^{m+n}, x)=a(s^m, s^nx)a(s^n, x)$.
To sum up, we have shown that for all $x\in X_+$, $a(s^{m+n}, x)=a(s^m, s^nx)a(s^n, x)$ holds for all $m$, $n\in \Z$.

Similarly, one can show this also holds for all $x\in X_-$. We include the details for completeness.

Fix any $x\in X_-$ and $n\in \Z$. We have 
\begin{align*}
a(s^{m+n}, x)=c(s^{m+n}, x)s^{m+n}=c(s^m, s^nx)c(s^n, x)s^{m+n}\\
=\begin{cases}
a(s^m, s^nx)s^ma(s^n, x)s^m,~&\mbox{if $s^nx\in X_+$}\\
a(s^m, s^nx)s^{-m}a(s^n, x)s^m,~&\mbox{if $s^nx\in X_-$}.
\end{cases}
\end{align*}

Case i: $s^nx\in X_+$.

Since $a(s^{m+n}, x)$ and $a(s^m, s^nx)$ take finitely many values as $m$ changes, we get that $\{s^ma(s^n, x)s^m: m\in \Z\}$ is a finite set. Therefore, $a(s^n, x)\in \Z t$, which implies that $a(s^{m+n}, x)=a(s^m, s^nx)a(s^n, x)$.

Case ii: $s^nx\in X_-$.

This case can be checked similarly.
\end{proof}

Define $D: X\to D_{\infty}$ by 
\begin{align*}
D(x)=\begin{cases}
t,&~\mbox{if $x\in X_-$}\\
e,&~\mbox{if $x\in X_+$.}
\end{cases}
\end{align*}
From Claim 2, we know $D$ is a continuous map.
Moreover, one can verify that $D(s^nx)a(s^n, x)D(x)^{-1}\in \Z$ for all $x\in X$ and $n\in \Z$. 

Indeed, one can check this by discussing four cases depending on $x\in X_{\pm}$ and $s^nx\in X_{\pm}$. We explain the proof for the case $x\in X_+$ and $s^nx\in X_-$ below, the other three cases can be checked similarly.

By assumption, there exists a finite subset $F\subset D_{\infty}$ such that $c(s^k, x)s^{-k}\in F\ni c(s^k, s^nx)s^k$ for all $k\in \Z$. From the cocycle identity $c(s^n, x)=c(s^k, s^nx)^{-1}c(s^{k+n}, x)$, we deduce that $c(s^n, x)=s^kg_ks^{k+n}$ for some $g_k\in F^{-1}F$ for all $k\in \Z$. Taking a sufficiently large $k$, we deduce that $g_k\in t\Z=t\langle  s\rangle$ and thus $c(s^n, x)\in t\langle  s\rangle$. Hence $D(s^nx)a(s^n, x)D(x)^{-1}=tc(s^n, x)s^{-n}\in \Z=\langle  s\rangle$.

Thus, the map $a': \Z\times X\to \Z\subset D_{\infty}$ defined by 
\[a'(s^n, x):=D(s^nx)a(s^n, x)D(x)^{-1}\]
is a continuous cocycle taking finitely many values in $\Z$.

Take a $\Z$-minimal component, say $X_0$, i.e. $X_0=\overline{Orb(\Z, x_0)}$ for some $x_0\in X$ is minimal w.r.t. the sub $\Z$-action. Then since $\Z\lhd D_{\infty}$, we know $tX_0=\overline{Orb(\Z, tx_0)}$ is also a minimal $\Z$-component. Moreover, $X_0\cup tX_0$ is a $G$-invariant closed subset, hence $X=X_0\cup tX_0$ by minimality assumption. Clearly, either $X_0\cap tX_0=\emptyset$ or $X_0=tX_0$.

At this stage, we need to split the proof into two cases.

\textbf{Case I:} $X_0=tX_0$, i.e. the sub $\Z$-action $\Z\curvearrowright X$ is still minimal.

We may think of $a'$ as a cocycle taking values in $\R$ via the natural inclusion $\Z\hookrightarrow \R$. As we assume $\Z\curvearrowright X$ is still minimal, we may apply Gottschalk-Hedlund theorem, i.e. Theorem \ref{thm: Gottschalk-Hedlund} and Lemma \ref{lem: independent of target group for being coboundaries} to deduce there exists some continuous map $L: X\to \Z$ such that $a'(s^n, x)=L(s^nx)^{-1}L(x)$. Thus, letting $L'(x)=L(x)D(x)$, we deduce that
\[a(s^n, x)=L'(s^nx)^{-1}L'(x).\] 
Therefore, 
\begin{align*}
c(s^n, x)&=\begin{cases}
L'(s^nx)^{-1}L'(x)s^n,~&\mbox{if $x\in X_+$}\\
L'(s^nx)^{-1}L'(x)s^{-n},~&\mbox{if $x\in X_-$}
\end{cases}\\
&=L'(s^nx)^{-1}s^nL'(x)~\mbox{for all $x\in X$ and $n\in \Z$.}
\end{align*}
Here, the last equality can be checked by observing that $L(x)\in \Z$ which is abelian and $t$ acts on $\Z$ as a reflection.

\textbf{Claim 4:} there exists some $k\in \Z$ such that $c(t, x)=L'(tx)^{-1}(s^kt)L'(x)$ for all $x\in X$.

\begin{proof}[Proof of Claim 4]
Notice that on the one hand, $c(s^{-n}, x)=L'(s^{-n}x)^{-1}s^{-n}L'(x)$; on the other hand,
\begin{align*}
c(s^{-n}, x)=c(ts^nt^{-1}, x)=c(t, s^nt^{-1}x)c(s^n, t^{-1}x)c(t^{-1}, x)\\
=c(t, s^nt^{-1}x)L'(s^nt^{-1}x)^{-1}s^nL'(t^{-1}x)c(t^{-1}, x).
\end{align*}
Thus, 
\begin{align}\label{eq: pass to normalizer}
s^{-n}(L'(x)c(t^{-1},x)^{-1}L'(t^{-1}x)^{-1})s^{-n}=L'(s^{-n}x)c(t, s^nt^{-1}x)L'(s^nt^{-1}x)^{-1}.
\end{align}
Since the right hand side of the above takes only finitely many values as $n$ changes, we deduce that $L'(x)c(t^{-1},x)^{-1}L'(t^{-1}x)^{-1}\in \Z t$. 

Write $L'(x)c(t^{-1},x)^{-1}L'(t^{-1}x)^{-1}=s^{k(x)}t$ for some $k(x)\in\Z$. Clearly, $X\ni x\mapsto k(x)\in \Z$ is continuous from definition. Next, from \eqref{eq: pass to normalizer}, we may deduce that $c(t^{-1}, x)^{-1}=L'(x)^{-1}s^{k(x)}tL'(t^{-1}x)$ and $c(t, s^nt^{-1}x)=L'(s^{-n}x)^{-1}s^{k(x)}tL'(s^nt^{-1}x)$.

Since $t^2=e$, the above implies that $k(x)=k(s^nx)$ for all $n\in \Z$ and $x\in X$. Since the sub $\Z$-action is minimal and $k(\cdot)$ is continuous, we deduce that $k(x)\equiv k$ for all $x\in X$. 
\end{proof}
To sum up, we have shown that 
\begin{align*}
c(s^n, x)=L'(s^nx)^{-1}s^nL'(x), ~c(t, x)=L'(tx)^{-1}s^ktL'(x) ~\mbox{for all} ~n\in \Z ~\mbox{and all}~ x\in X.
\end{align*}
 Clearly, this is equivalent to say $c$ is cohomologous to $\phi\in Aut(D_{\infty})$, where $\phi$ is given by $\phi(s)=s$ and $\phi(t)=s^kt$ as mentioned in Lemma \ref{lem: subgroups of the infinite dihedral group}.

Finally, we can apply Proposition 4.4 in \cite{Li} to conclude that the two actions are conjugate.

\textbf{Case II:} $X_0\cap tX_0=\emptyset$, i.e. $X=X_0\sqcup tX_0$ and hence $X_0$ is a clopen subset.

We apply Gottschalk-Hedlund theorem to the minimal subactions $\Z\curvearrowright X_0$ and $\Z\curvearrowright tX_0$ respectively. Argue similarly as in Case I, we find two continuous maps $L'': X_0\to D_{\infty}$ and $L': tX_0\to D_{\infty}$ such that
\begin{align}\label{eq: expression for c in case II}
c(s^n, x)=\begin{cases}
L''(s^nx)^{-1}s^nL''(x),~\mbox{for all $x\in X_0$ and $n\in \Z$},\\
L'(s^nx)^{-1}s^nL'(x),~\mbox{for all $x\in tX_0$ and $n\in \Z$}.
\end{cases}
\end{align}
Fix any $x\in X_0$, we compute as follows:
\begin{align*}
c(s^{-n}, x)&=L''(s^{-n}x)^{-1}s^{-n}L''(x)\\
c(s^{-n}, x)&=c(ts^nt^{-1}, x)\\
&=c(t, s^nt^{-1}x)c(s^n, t^{-1}x)c(t^{-1}, x)\\
&=c(t, s^nt^{-1}x)[L'(s^nt^{-1}x)^{-1}s^nL'(t^{-1}x)]c(t^{-1},x).
\end{align*}
Thus, we deduce 
\[L''(s^{-n}x)c(t, s^nt^{-1}x)L'(s^nt^{-1}x)^{-1}=s^{-n}[L''(x)c(t^{-1}, x)^{-1}L'(t^{-1}x)^{-1}]s^{-n}.\]
As $\{L''(s^{-n}x)c(t, s^nt^{-1}x)L'(s^nt^{-1}x)^{-1}: n\in\Z\}$ is a finite set, we deduce that 
\begin{align*}
L''(x)c(t^{-1}, x)^{-1}L'(t^{-1}x)^{-1}
=L''(s^{-n}x)c(t, s^nt^{-1}x)L'(s^nt^{-1}x)^{-1}\in \Z t~\mbox{for all $n\in \Z$}.
\end{align*}

Let us write 
\begin{align}\label{eq: eq1 for two transfer maps}
L''(x)c(t^{-1}, x)^{-1}L'(t^{-1}x)^{-1}=s^{k(x)}t
\end{align}
for some map $k: X_0\to \Z$. Clearly, $k(\cdot)$ is a continuous map. Moreover, since $X_0$ is $\Z=\langle  s\rangle$-invariant, from the above identity, we deduce that
\begin{align*}
L''(s^{-n}x)c(t^{-1}, s^{-n}x)^{-1}L'(t^{-1}s^{-n}x)^{-1}&=s^{k(s^{-n}x)}t\\
[L''(s^{-n}x)c(t, s^nt^{-1}x)L'(s^nt^{-1}x)^{-1}]^{-1}&=[s^{k(x)}t]^{-1}.
\end{align*}
Multiply the above two expressions and use $t^2=e$, we deduce that $e=s^{k(s^{-n}x)-k(x)}$, i.e. $k(s^{-n}x)=k(x)$ for all $n\in \Z$. Since $X_0$ is $\Z=\langle s\rangle$-minimal, we deduce that $k$ is constant, say $k(x)=k$ for all $x\in X_0$. Then, \eqref{eq: eq1 for two transfer maps} can be simplified to 
\begin{align*}
L'(tx)=L'(t^{-1}x)=t^{-1}s^{-k}L''(x)c(t^{-1}, x)^{-1}.
\end{align*} 
From \eqref{eq: expression for c in case II}, a calculation shows that for each $x\in X_0$,
\begin{align*}
c(s^n, tx)&=L'(s^ntx)^{-1}s^nL'(tx)=L'(ts^{-n}x)^{-1}s^nL'(tx)\\
&=[t^{-1}s^{-k}L''(s^{-n}x)c(t^{-1}, s^{-n}x)^{-1}]^{-1}s^n[t^{-1}s^{-k}L''(x)c(t^{-1}, x)^{-1}]\\
&=c(t^{-1}, s^{-n}x)L''(s^{-n}x)^{-1}s^{-n}L''(x)c(t^{-1}, x)^{-1}.
\end{align*}
In fact, using cocycle identity, we may compute the full expression for $c$. More precisely, for all $x\in X_0$, we have
\begin{align}\label{eq: final expression for c in case II}
\begin{split}
c(s^n, x)&=L''(s^nx)^{-1}s^nL''(x),\\
c(s^nt, x)&=c(t^{-1}, s^{-n}x)L''(s^{-n}x)^{-1}s^{-n}L''(x), \\
c(s^n, tx)&=c(t^{-1}, s^{-n}x)L''(s^{-n}x)^{-1}s^{-n}L''(x)c(t^{-1}, x)^{-1},\\
c(s^nt, tx)&=L''(s^nx)^{-1}s^nL''(x)c(t, tx).
\end{split}
\end{align}
Now, to avoid confusion, let us write $\beta$ for the 2nd action, i.e. $gx=\beta_{c(g, x)}(x)$ for all $g\in D_{\infty} $ and $x\in X$.
Define a map 
\begin{align*}
X=X_0\sqcup tX_0&\overset{\phi}{\longrightarrow}X\\
\phi(x)&=\beta_{L''(x)}(x)\\
\phi(tx)&=\beta_{t^{-1}L''(x)}(x)~\mbox{for all}~x\in X_0.
\end{align*} 
Clearly, $\phi$ is continuous as $X_0$ is clopen and both $c$ and $L'': X_0\to D_{\infty}$ are continuous.

\textbf{Claim 5:} $\phi$ is the desired conjugacy between the two actions, i.e. $\phi$ is a homeomorphism and $\phi(gx)=\beta_g(\phi(x))$ for all $g\in D_{\infty}$ and $x\in X$.
\begin{proof}[Proof of Claim 5]
The conjugacy identity can be checked by computation using $\beta_{c(g, x)}(x)=gx$ and the above expression for $c(g, x)$ in \eqref{eq: final expression for c in case II}. We are left to show that $\phi$ is a bijection.

(a) $\phi$ is injective.

First, we check $\phi|_{X_0}$ is injective. Indeed, take any $x, y\in X_0$, suppose $\phi(x)=\phi(y)$, that is, $\beta_{L''(x)}(x)=\beta_{L''(y)}(y)$, i.e. $\beta_{L''(y)^{-1}L''(x)}(x)=y$.

Since $D_{\infty}\ni g\mapsto c(g, x)\in D_{\infty}$ is a bijection by Proposition \ref{prop: from coe to bi-Lipschitz maps}, we may find some $g\in D_{\infty}$ such that $c(g, x)=L''(y)^{-1}L''(x)$. Thus, $y=\beta_{c(g, x)}(x)=gx$. Since $x,y\in X_0$ and $X_0\cap tX_0=\emptyset$, we deduce that $g\in \Z$, say $g=s^n$. Hence, $c(s^n, x)=L''(s^nx)^{-1}L''(x)$. In view of \eqref{eq: final expression for c in case II}, we deduce $s^n=e$, i.e. $x=y$.

Clearly, this also shows that $\phi|_{tX_0}$ is injective. 

We are left to show that $\phi(x)\neq \phi(ty)$ for any $x, y\in X_0$. 

Assume not, then $\beta_{L''(x)}(x)=\beta_{t^{-1}L''(y)}(y)$, i.e. $\beta_{L''(y)^{-1}tL''(x)}(x)=y$. We may find some $g\in D_{\infty}$ such that $c(g, x)=L''(y)^{-1}tL''(x)$. Hence, $y=\beta_{c(g, x)}(x)=gx$. So $g\in \Z$, say $g=s^n$. Thus, $c(s^n, x)=L''(s^nx)^{-1}tL''(x)$. In view of \eqref{eq: final expression for c in case II}, this implies $s^n=t$, a contradiction.

(b) $\phi$ is surjective. 

First, we observe that the 2nd action also has two distinct minimal $\Z$-components. Suppose not, we may apply the proof of Case I to the 2nd action to see the first action is conjugate to the 2nd one. Thus, these two actions must have the same number of minimal $\Z$-components (as each automorphism of $D_{\infty}$ must fix the subgroup $\Z=\langle s\rangle$ globally), but this contradicts our assumption in Case II.

Let us write the two minimal $\Z$-components of the 2nd action as $Y_0$ and $\beta_t(Y_0)$, i.e. $X=Y_0\sqcup \beta_t(Y_0)$.

Now, since $\phi(gx)=\beta_g(\phi(x))$ for all $g\in D_{\infty}$ and all $x\in X$, we know $\phi(X_0), \phi(tX_0)\in \{Y_0, \beta_t(Y_0)\}$. As $\phi$ is injective and $X_0\cap tX_0=\emptyset$, we deduce that $\{\phi(X_0), \phi(tX_0)\}=\{Y_0, \beta_t(Y_0)\}$, so $\phi$ is surjective.
\end{proof}
\end{proof}
\begin{remark}\label{remark: torsion free assumption is unnecessary for full shifts}
By \cite{CJ, Co}, we know that for every finitely generated one-ended group $G$, its full shifts $G\curvearrowright A^G$ for finite $A$ are continuous cocycle superrigid actions with respect to any countable target groups. It was mentioned in \cite[Corollary 5]{CJ} that this can be combined with \cite[Theorem 1.6]{Li} to deduce the full shifts as above are continuous orbit equivalence rigid actions if $G$ is further assumed to be torsion free and amenable. In fact, this further assumption is unnecessary. Indeed, we just observe that from continuous cocycle superrigidity, we deduce $c(g, x)=L(gx)^{-1}\phi(g)L(x)$. Then take $x$ to be any fixed point for this full shift action and use $g\mapsto c(g, x)$ is a bijection for all $x\in X$, we deduce that $g\mapsto \phi(g)$ is automatically a group isomorphism of $G$. Hence, we may just apply \cite[Proposition 4.4]{Li} instead to get the conclusion.
\end{remark}

\section{concluding remarks}\label{sec: remarks}
Here are several remarks on the theorems and possible generalizations.

(1) Let $F$ be any non-trivial finite abelian group. From the proof of Proposition \ref{prop: coe but not conjugacy}, we know that if $\Z\curvearrowright X$ is a minimal action admitting a continuous cocycle $c: \Z\times X\to F$ which is not cohomologous to a group homomorphism from $\Z$ to $C(F)$, the center of $F$, then we can construct two continuously orbit equivalent but not conjugate actions.

In fact, for each non-trivial finite abelian group $F$, such a minimal action $\Z\curvearrowright X$ and a cocycle as above do exist (by taking $\Z\curvearrowright X$ to be a well-chosen Toeplitz system \cite[Example 1.20]{Gl}, as shown to me by Prof. Lema\'{n}czyk). By combining this with Lemma \ref{lem: independent of target group for being coboundaries}, we can deduce that for each finite non-trivial group $F$, $F\times \Z$ admits two continuously orbit equivalent but not conjugate minimal actions. 

(2) The construction in the proof of item (ii) Proposition \ref{prop: coe but not conjugacy} can be modified to deal with a general semi-direct product group $G=F\rtimes_{\sigma} \Z$ for a finite group $F$. Since $Aut(F\rtimes_{\sigma}\Z)$ becomes more involved and the existence of minimal actions that admit skew cocycles that are not trivial  into finite groups is not clear to us, we do not study the full generality here.

(3) Let $G=D_{\infty}$. Then we take a $\Z$-minimal component, say $X_0$, i.e. $X_0=\overline{Orb(\Z, x_0)}$ for some $x_0\in X$ is minimal w.r.t. the sub $\Z$-action. Then since $\Z\lhd G$, we know $tX_0=\overline{Orb(\Z, tx_0)}$ is also a minimal $\Z$-component. Moreover, $X_0\cup tX_0$ is a $G$-invariant closed subset, hence $X=X_0\cup tX_0$ if we assume $G\curvearrowright X$ is minimal. 

We observe below that once we have two minimal $\Z$-components, then the $G$-action is an induced action.
\begin{proposition}
With the above notations and assumptions, if $X_0\cap tX_0=\emptyset$, then the action  $G\curvearrowright X=X_0\sqcup tX_0$ is conjugate to the induced action $G\curvearrowright G/{\Z}\times X_0$ associated to a cocycle $\delta: G\times G/{\Z}\to \Z$.
\end{proposition}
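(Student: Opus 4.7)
The approach is to write down the obvious identification between $X=X_0\sqcup tX_0$ and $G/\Z\times X_0$ by hand. Choose the lift $L:G/\Z\to G$ with $L(\Z)=e$ and $L(t\Z)=t$, giving rise to the cocycle
\[
\delta(g, g'\Z)=L(gg'\Z)^{-1} g\, L(g'\Z).
\]
Normality of $\Z$ in $G=D_\infty$ ensures $\delta$ takes values in $\Z$, so the induced action $g\cdot(g'\Z, y) = \bigl(gg'\Z,\, \alpha_{\delta(g, g'\Z)}(y)\bigr)$ is well-defined on $G/\Z\times X_0$ (the $\Z$-action on $X_0$ being $\alpha|_\Z$). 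Then define
\[
\phi:G/\Z\times X_0\to X,\qquad \phi(g'\Z, y)=\alpha_{L(g'\Z)}(y),
\]
i.e.\ $\phi(\Z,y)=y$ and $\phi(t\Z,y)=ty$.

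The first step is to check that $\phi$ is a homeomorphism. Both $X_0$ and $tX_0$ are closed (each is a minimal $\Z$-invariant subset of the compact Hausdorff space $X$), and since by hypothesis they are disjoint with $X=X_0\cup tX_0$, both are in fact clopen. The restriction of $\phi$ to $\{\Z\}\times X_0$ is the identity onto $X_0$, and the restriction to $\{t\Z\}\times X_0$ is the map $y\mapsto ty$, which is a homeomorphism $X_0\to tX_0$ whose inverse is itself (since $t^2=e$). Gluing these two clopen pieces shows $\phi$ is a homeomorphism.

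The second step is equivariance, which is a direct computation: for any $g\in G$ and $(g'\Z,y)\in G/\Z\times X_0$,
\[
\phi\bigl(g\cdot(g'\Z, y)\bigr)=\alpha_{L(gg'\Z)}\bigl(\alpha_{\delta(g,g'\Z)}(y)\bigr)=\alpha_{L(gg'\Z)\delta(g,g'\Z)}(y)=\alpha_{gL(g'\Z)}(y)=\alpha_g\bigl(\phi(g'\Z, y)\bigr),
\]
using the defining relation $L(gg'\Z)\delta(g, g'\Z)=g\,L(g'\Z)$.

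The only points that need verifying are that $\delta$ really takes values in $\Z$ (which follows from $\Z\lhd G$) and that $X_0$ is clopen (which uses minimality of $\alpha$ together with the disjointness hypothesis); neither presents a genuine obstacle. The substantive content of the proposition is the recognition that, once one has a two-component decomposition $X=X_0\sqcup tX_0$ on which the normal subgroup $\Z$ preserves each piece, the whole $G$-action is manifestly the induction of $\alpha|_\Z: \Z\curvearrowright X_0$ from $\Z$ up to $G$.
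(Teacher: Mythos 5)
Your proof is correct and is essentially the same as the paper's: the paper uses the same lift $L(\Z)=e$, $L(t\Z)=t$ and defines the inverse map $\psi(x)=(\Z,x)$ for $x\in X_0$, $\psi(x)=(t\Z,tx)$ for $x\in tX_0$, which is exactly the inverse of your $\phi$. Your abstract verification of equivariance via $L(gg'\Z)\delta(g,g'\Z)=gL(g'\Z)$ is if anything slightly cleaner than the paper's case-by-case tabulation of $\delta$ (one small quibble: $\delta$ lands in $\Z$ simply because $L$ is a lift, not because $\Z$ is normal).
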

\begin{proof}
Indeed, consider the natural lift map $L: G/{\Z}\to G$ given by $L(\Z)=e$ and $L(t\Z)=t$. Then for the associated cocycle $\delta: G\times G/\Z\to \Z$ given by $\delta(g, g'\Z)=L(gg'\Z)^{-1}gL(g'\Z)$ satisfies that
\begin{align*}
\delta(s^n, \Z)=s^n,~\delta(s^n, t\Z)=s^{-n},~\delta(s^nt, \Z)=s^{-n}~\mbox{and}~\delta(s^nt, t\Z)=s^n~\mbox{for all $n\in \Z$.}
\end{align*}
Now, define $\psi: X\to G/{\Z}\times X_0$ by setting
\begin{align*}
\psi(x)=\begin{cases}
(\Z, x)& \mbox{if $x\in X_0$}\\
(t\Z, tx)& \mbox{if $x\in tX_0$.}
\end{cases}
\end{align*}
Clearly, $\psi$ is a bijection and continuous (as $X_0$ is clopen). One can check that $\psi(gx)=g\psi(x)$ for all $g\in G$ and $x\in X$. 

Hence, $\psi$ is a conjugacy between $G\curvearrowright X=X_0\sqcup tX_0$ and the induced action $G\curvearrowright G/{\Z}\times X_0$ for a free minimal action $\Z\curvearrowright X_0$ (i.e. the sub $\Z$-action) w.r.t. the above chosen cocycle $c$.
\end{proof}

Next, consider any two induced actions $G\curvearrowright G/{\Z}\times X_0$ from two minimal topologically free $\Z$-actions on $X_0$ defined above and let $\phi$ be a homeomorphism which induces a continuous orbit equivalence between these two actions. Suppose we know that $\phi=id$, then one can show the two actions are conjugate directly. This shows the main difficulty for dealing with Case II in the proof of Theorem \ref{thm: + result} is that we do not know, a priori, how the homeomorphism behaves on the two minimal $\Z$-components.

Before proving the above assertion, let us fix some notation for the induced actions. Fix any lift $L: G/{\Z}\to G$, i.e. $L(g\Z)\Z=g\Z$ for all $g\in G$. We may further assume $L(\Z)=e$ and $L(t\Z)=t$. Then let $\delta: G\times G/{\Z}\to \Z$ be the associated cocycle $\delta(g, s\Z)=L(gs\Z)^{-1}gL(\Z)$. Observe that $\delta(s, \Z)=L(s\Z)^{-1}sL(\Z)=s$ for all $s\in \Z$.

Fix a minimal topologically free action $\alpha: \Z\curvearrowright X_0$, then the induced action $\tilde{\alpha}: G\curvearrowright X=G/{\Z}\times X_0$ is defined as follows:
$\tilde{\alpha}_s(g\Z, x):=(sg\Z, \alpha_{\delta(s, g\Z)}(x))$ for all $s, g\in G$ and $x\in X_0$.

Now it suffices to show the following holds.

\begin{proposition}\label{prop: 2 minimal Z-components}
Let $\alpha,\beta: \Z\curvearrowright X_0$ be two minimal topologically free actions. Let $\tilde{\alpha},\tilde{\beta}: G\curvearrowright X:= G/{\Z}\times X_0$ be the associated induced actions as recalled above. Then the following hold.\\
(1) $\tilde{\alpha}\overset{coe}{\sim}\tilde{\beta}~\mbox{via the identity homeomorphism}\Rightarrow \alpha\overset{coe}{\sim}\beta$.\\
(2) $\alpha\overset{coe}{\sim}\beta\Rightarrow  \tilde{\alpha}\overset{conj.}{\sim}\tilde{\beta}$.
\end{proposition}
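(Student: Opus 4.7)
For item (1), the plan is to restrict the given orbit cocycle to the $\Z$-subgroup of $G$ acting on the clopen slice $\{\Z\} \times X_0 \cong X_0$. Concretely, the assumption that the identity map $\mathrm{id}: X \to X$ realizes a continuous orbit equivalence between $\tilde\alpha$ and $\tilde\beta$ yields a continuous cocycle $a: G \times X \to G$ such that $\tilde\alpha_g(p) = \tilde\beta_{a(g,p)}(p)$ for all $g \in G$ and all $p \in X$. For $s \in \Z$ and $x \in X_0$, we have $\tilde\alpha_s(\Z, x) = (\Z, \alpha_s(x))$, whose first coordinate is the coset $\Z$; on the other hand, the first coordinate of $\tilde\beta_{a(s, (\Z, x))}(\Z, x)$ equals $a(s, (\Z, x))\Z$, which forces $a(s, (\Z, x)) \in \Z$. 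Setting $b(s, x) := a(s, (\Z, x))$ then gives a continuous $\Z$-valued cocycle with $\alpha_s(x) = \beta_{b(s, x)}(x)$; the cocycle identity for $b$ is inherited from that of $a$. Applying the same argument to the inverse cocycle produces the inverse cocycle on $X_0$, so $\mathrm{id}: X_0 \to X_0$ realizes a continuous orbit equivalence from $\alpha$ to $\beta$.

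For item (2), the strategy is in three steps. First, since $\alpha$ and $\beta$ are minimal topologically free $\Z$-actions, the Boyle--Tomiyama theorem applies and upgrades continuous orbit equivalence to flip-conjugacy: there exist $\epsilon \in \{\pm 1\}$ and a homeomorphism $\psi: X_0 \to X_0$ with $\psi \circ \alpha_n = \beta_{\epsilon n} \circ \psi$ for all $n \in \Z$. Second, define $\phi \in \Aut(G)$ by $\phi(s) = s^\epsilon$ and $\phi(t) = t$; a direct check shows that this respects the defining relation $tsts = e$, and since $\phi(\Z) = \Z$ the assignment $g\Z \mapsto \phi(g)\Z$ descends to a well-defined bijection of $G/\Z$ (in fact the identity). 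Third, set $\Psi: X \to X$ by $\Psi(g\Z, x) := (\phi(g)\Z, \psi(x))$; this is a homeomorphism since $X_0$ is clopen and $\psi$ is a homeomorphism.

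To show that the pair $(\Psi, \phi)$ conjugates $\tilde\alpha$ to $\tilde\beta$, one computes both sides of $\Psi \circ \tilde\alpha_h = \tilde\beta_{\phi(h)} \circ \Psi$ and uses the intertwining $\psi \circ \alpha_n = \beta_{\epsilon n} \circ \psi$; the desired equality then reduces to the identity $\epsilon \cdot \delta(h, g\Z) = \delta(\phi(h), \phi(g)\Z)$ in $\Z$. This last identity is verified by a short case analysis over the four pairs $(h, g\Z) \in \{s^n, s^n t\} \times \{\Z, t\Z\}$ using the explicit formulas for $\delta$ recorded just before the proposition statement. I do not anticipate any essential obstacle: item (1) is a direct coset-bookkeeping argument, and item (2) reduces to a routine finite case-check once Boyle--Tomiyama has been invoked to fix the sign $\epsilon$ and the corresponding automorphism $\phi$ of $G$.
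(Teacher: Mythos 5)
Your proposal is correct and follows essentially the same route as the paper: for (1) you restrict the orbit cocycle to the $\Z$-invariant slice $\{\Z\}\times X_0$ and observe the coset condition forces it to land in $\Z$, and for (2) you invoke Boyle--Tomiyama to get a flip-conjugacy $\psi$ and extend it to $X$ via the automorphism $\phi(s)=s^{\epsilon}$, $\phi(t)=t$, exactly as the paper does with its map $(g\Z,x)\mapsto(g\Z,\beta_{f(x)}(x))$ and automorphism $\tau$. The only cosmetic difference is that you state the needed compatibility as the explicit identity $\phi(\delta(h,g\Z))=\delta(\phi(h),\phi(g)\Z)$ checked over the four cases, which the paper leaves implicit.
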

\begin{proof}
(1) Let $c: G\times X\to G$ be the continuous orbit cocycle w.r.t. $\tilde{\alpha}$, i.e. $\tilde{\alpha}_g(\tilde{x})=\tilde{\beta}_{c(g, \tilde{x})}(\tilde{x})$, where $\tilde{x}=(s\Z, x)\in X$ is any point. Then a calculation using the definition of the induced actions shows that
\begin{align*}
gs\Z=c(g, \tilde{x})s\Z,~~\alpha_{\delta(g, s\Z)}(x)=\beta_{\delta(c(g, \tilde{x}), s\Z)}(x),~\forall~g, s\in G,~\forall~x\in X_0.
\end{align*}
Notice that this implies that $c(g, \tilde{x})\Z=g\Z$ as $\Z\lhd G$. In particular, $c(s, \tilde{x})\in \Z$ for all $s\in \Z$.  

Now, we define the following map $\theta: \Z\times X_0\to \Z$ given by $\theta(s, x)=c(s, \tilde{x})$, where $\tilde{x}:=(\Z, x)$.

Clearly, $\theta$ is well-defined and continuous. Then, we verify that $\theta$ is a cocycle w.r.t. $\alpha$.

Take any $s_1, s_2\in \Z$, and $x\in X_0$, we have
\begin{align*}
\theta(s_1s_2, x)=c(s_1s_2, \tilde{x})=c(s_1, \tilde{\alpha}_{s_2}(\tilde{x}))c(s_2, \tilde{x})\\
\theta(s_1, \alpha_{s_2}(x))\theta(s_2, x)=c(s_1, \widetilde{\alpha_{s_2}(x)})c(s_2,\tilde{x}).
\end{align*}
It suffices to check that $\tilde{\alpha}_{s_2}(\tilde{x})=\widetilde{\alpha_{s_2}(x)}$.
Recall that $\tilde{x}=(\Z,x)$ and $\tilde{\alpha}_{s_2}(\tilde{x})=(s_2\Z, \alpha_{\delta(s_2,\Z)}x)=(\Z, \alpha_{s_2}(x))$. Meanwhile, $\widetilde{\alpha_{s_2}(x)}=(\Z,\alpha_{s_2}(x))$.

Now, let us check that $\theta$ gives us a coe between $\alpha$ and $\beta$.

Indeed, we have
\begin{align*}
\alpha_s(x)&=\alpha_{\delta(s, \Z)}(x)=\beta_{\delta(c(s, \tilde{x}), \Z)}(x)\\
&=\beta_{L(c(s, \tilde{x})\Z)^{-1}c(s, \tilde{x})L(\Z)}(x)\\
&=\beta_{L(s\Z)^{-1}\theta(s, x)}(x)\\
&=\beta_{\theta(s, x)}(x), ~\forall~s\in \Z,~\forall~x\in X_0.
\end{align*}
(2) Let $\theta: \Z\times X_0\to \Z$ be the cocycle from the coe. By \cite{BT}, we know that $\theta(s, x)=f(\alpha_s(x))^{-1}s^{\pm}f(x)$ for some continuous map $f: X\to \Z$. One can check $X_0\ni x\mapsto \beta_{f(x)}(x)\in X_0$ intertwines $\alpha_s$ with $\beta_{s^{\pm}}$, which implies $Y\ni (g\Z, x)\mapsto (g\Z, \beta_{f(x)}(x))\in X$ intertwines $\tilde{\alpha_g}$ with $\tilde{\beta}_{\tau(g)}$. Here, $\tau\in Aut(G)$ is defined as follows:
\begin{align*}
\tau=\begin{cases}
id,~\mbox{ if $\theta(s, x)=f(\alpha_s(x))^{-1}sf(x)$ for all $s\in\Z$}\\
\phi,~\mbox{ if $\theta(s, x)=f(\alpha_s(x))^{-1}s^{-1}f(x)$ for all $s\in\Z$}
\end{cases}
\end{align*}
Here, $\tau\in Aut(G)$ is determined by $\tau(s)=s^{-1}$ for all $s\in \Z$ and $\tau(t)=t$ for the reflection $t\in G$.
\end{proof}

\subsection*{Acknowledgements}

This work is partially supported by NSFC grant no. 12001081. He is grateful to Prof. Nhan-Phu Chung and Prof. Xin Li for useful discussion, to Prof. Xin Li for sharing his unpublished note on coe rigidity dating back to 2016. He also thanks Prof. Mariusz Lema\'{n}czyk and Prof. Zhengxing Lian for very helpful correspondence on cocycles. He also thanks the referee for his/her helpful suggestion which improves the presentation.

\begin{bibdiv}
\begin{biblist}

\bib{ALSS}{article}{
author={Amini, Massoud},
author={Li, Kang},
author={Sawicki, Damian},
author={Shakibazadeh, Ali},
title={Dynamic asymptotic dimension for actions of virtually cyclic groups},
status={ To appear in Proc. Edinburgh Math. Soc. (2021)},
}
\bib{BS}{article}{
   author={Benjamini, Itai},
   author={Shamov, Alexander},
   title={Bi-Lipschitz bijections of $\Bbb Z$},
   journal={Anal. Geom. Metr. Spaces},
   volume={3},
   date={2015},
   number={1},
   pages={313--316},}

\bib{Bow}{article}{
   author={Bowen, Lewis},
   title={Measure conjugacy invariants for actions of countable sofic
   groups},
   journal={J. Amer. Math. Soc.},
   volume={23},
   date={2010},
   number={1},
   pages={217--245},}

\bib{BT}{article}{
   author={Boyle, Mike},
   author={Tomiyama, Jun},
   title={Bounded topological orbit equivalence and $C^*$-algebras},
   journal={J. Math. Soc. Japan},
   volume={50},
   date={1998},
   number={2},
   pages={317--329},}
   
   \bib{CJ}{article}{
   author={Chung, Nhan-Phu},
   author={Jiang, Yongle},
   title={Continuous cocycle superrigidity for shifts and groups with one
   end},
   journal={Math. Ann.},
   volume={368},
   date={2017},
   number={3-4},
   pages={1109--1132},}

\bib{Co}{article}{
   author={Cohen, David Bruce},
   title={Continuous cocycle superrigidity for the full shift over a
   finitely generated torsion group},
   journal={Int. Math. Res. Not. IMRN},
   date={2020},
   number={6},
   pages={1610--1620},}

\bib{CM}{article}{
   author={Cortez, Mar\'{\i}a Isabel},
   author={Medynets, Konstantin},
   title={Orbit equivalence rigidity of equicontinuous systems},
   journal={J. Lond. Math. Soc. (2)},
   volume={94},
   date={2016},
   number={2},
   pages={545--556},}
   
\bib{GMPS08}{article}{
   author={Giordano, Thierry},
   author={Matui, Hiroki},
   author={Putnam, Ian F.},
   author={Skau, Christian F.},
   title={Orbit equivalence for Cantor minimal $\mathbb{ Z}^2$-systems},
   journal={J. Amer. Math. Soc.},
   volume={21},
   date={2008},
   number={3},
   pages={863--892},}

\bib{GMPS10}{article}{
   author={Giordano, Thierry},
   author={Matui, Hiroki},
   author={Putnam, Ian F.},
   author={Skau, Christian F.},
   title={Orbit equivalence for Cantor minimal $\mathbb{Z}^d$-systems},
   journal={Invent. Math.},
   volume={179},
   date={2010},
   number={1},
   pages={119--158},}

\bib{GPS95}{article}{
   author={Giordano, Thierry},
   author={Putnam, Ian F.},
   author={Skau, Christian F.},
   title={Topological orbit equivalence and $C^*$-crossed products},
   journal={J. Reine Angew. Math.},
   volume={469},
   date={1995},
   pages={51--111},}   
   
\bib{Gl}{book}{
   author={Glasner, Eli},
   title={Ergodic theory via joinings},
   series={Mathematical Surveys and Monographs},
   volume={101},
   publisher={American Mathematical Society, Providence, RI},
   date={2003},
   pages={xii+384},}

   \bib{GH}{book}{
   author={Gottschalk, Walter Helbig},
   author={Hedlund, Gustav Arnold},
   title={Topological dynamics},
   series={American Mathematical Society Colloquium Publications, Vol. 36},
   publisher={American Mathematical Society, Providence, R. I.},
   date={1955},
   pages={vii+151},}

\bib{KH}{book}{
   author={Katok, Anatole},
   author={Hasselblatt, Boris},
   title={Introduction to the modern theory of dynamical systems},
   series={Encyclopedia of Mathematics and its Applications},
   volume={54},
   note={With a supplementary chapter by Katok and Leonardo Mendoza},
   publisher={Cambridge University Press, Cambridge},
   date={1995},
   pages={xviii+802},}
   
   \bib{KL1}{article}{
   author={Kerr, David},
   author={Li, Hanfeng},
   title={Entropy and the variational principle for actions of sofic groups},
   journal={Invent. Math.},
   volume={186},
   date={2011},
   number={3},
   pages={501--558},}
   
   \bib{KL_book}{book}{
   author={Kerr, David},
   author={Li, Hanfeng},
   title={Ergodic theory. Independence and dichotomies},
   series={Springer Monographs in Mathematics},
   publisher={Springer, Cham},
   date={2016},
   pages={xxxiv+431},}
   
   \bib{KL2}{article}{
author={Kerr, David},
   author={Li, Hanfeng},
   title={Entropy, products, and bounded orbit equivalence},
   status={arXiv: 2002.02397},  
      }

\bib{KT}{article}{
author={Kerr, David},
author={Tucker-Drob, Robin},
title={Dynamical alternating groups, stability, property Gamma, and inner amenability
},
status={to appear in Annales Scientifiques de l'ENS},
}

\bib{LM}{article}{
   author={Lema\'{n}czyk, Mariusz},
   author={Mentzen, Mieczys\l aw K.},
   title={Topological ergodicity of real cocycles over minimal rotations},
   journal={Monatsh. Math.},
   volume={134},
   date={2002},
   number={3},
   pages={227--246},}

\bib{Li}{article}{
   author={Li, Xin},
   title={Continuous orbit equivalence rigidity},
   journal={Ergodic Theory Dynam. Systems},
   volume={38},
   date={2018},
   number={4},
   pages={1543--1563},}
   
\bib{Li_qi}{article}{
   author={Li, Xin},
   title={Dynamic characterizations of quasi-isometry and applications to
   cohomology},
   journal={Algebr. Geom. Topol.},
   volume={18},
   date={2018},
   number={6},
   pages={3477--3535},}   
   
   \bib{MST}{article}{
   author={Medynets, Kostya},
   author={Sauer, Roman},
   author={Thom, Andreas},
   title={Cantor systems and quasi-isometry of groups},
   journal={Bull. Lond. Math. Soc.},
   volume={49},
   date={2017},
   number={4},
   pages={709--724},}
   
   \bib{OS}{article}{
author={Ortega, Eduard},
author={Scarparo, Eduardo},
title={Almost finiteness and homology of certain non-free actions},
status={To appear in Groups, Geometry and Dynamics} ,  
   }
   
\bib{Sca}{article}{
   author={Scarparo, Eduardo},
   title={Homology of odometers},
   journal={Ergodic Theory Dynam. Systems},
   volume={40},
   date={2020},
   number={9},
   pages={2541--2551},}

\bib{Sc}{book}{
   author={Schmidt, Klaus},
   title={Cocycles on ergodic transformation groups},
   note={Macmillan Lectures in Mathematics, Vol. 1},
   publisher={Macmillan Company of India, Ltd., Delhi},
   date={1977},
   pages={202},}
   
\bib{Sc1}{article}{
   author={Schmidt, Klaus},
   title={The cohomology of higher-dimensional shifts of finite type},
   journal={Pacific J. Math.},
   volume={170},
   date={1995},
   number={1},
   pages={237--269},}

\bib{Sc2}{article}{
   author={Schmidt, Klaus},
   title={Tilings, fundamental cocycles and fundamental groups of symbolic
   $\mathbb{Z}^d$-actions},
   journal={Ergodic Theory Dynam. Systems},
   volume={18},
   date={1998},
   number={6},
   pages={1473--1525},}

\bib{Su}{article}{
   author={Suzuki, Yuhei},
   title={Amenable minimal Cantor systems of free groups arising from
   diagonal actions},
   journal={J. Reine Angew. Math.},
   volume={722},
   date={2017},
   pages={183--214},}

\end{biblist}
\end{bibdiv}

\end{document}